\DeclarePairedDelimiter\abs{\lvert}{\rvert}%
\let\oldabs\abs
\def\abs{\@ifstar{\oldabs}{\oldabs*}}
\newcommand{\vertiii}[1]{{\left\vert\kern-0.25ex\left\vert\kern-0.25ex\left\vert #1 
    \right\vert\kern-0.25ex\right\vert\kern-0.25ex\right\vert}}
\def\ZZ{\mathbb{Z}}
\def\QQ{\mathbb{Q}}
\def\Q{\mathcal{Q}} 
\def\Qh{\widehat{Q}}
\def\11{\mathds{1}}
\def\FF{\mathbb{F}}
\def\CC{\mathcal{C}}
\def\DD{\mathcal{D}}
\def\CP{\mathbb{CP}}
\def\F*{\mathrm{Fin}_*}
\def\Tor{\mathrm{Tor}}
\def\Alg{\mathrm{Alg}}
\def\Hom{\mathrm{Hom}}
\def\epsilon{\varepsilon}
\let\phi\varphi
\def\1{\langle 1 \rangle}
\def\2{\langle 2 \rangle}
\def\xibar{\overline{\xi}}
\def\BPn{\mathrm{BP} \langle n \rangle}
\def\<{\langle}
\def\>{\rangle}
\def\sl{\mathrm{sl}}
\def\sl1{\mathrm{sl}_1}
\def\gl1{\mathrm{gl}}
\def\gl1{\mathrm{gl}_1}
\def\coh{\mathrm{coh}}
\def\HFp{\mathrm{H}\mathbb{F}_p}
\def\H{\mathrm{H}}
\def\BP{\mathrm{BP}}
\def\MU{\mathrm{MU}}
\def\bCPn{\left[\mathbb{CP}^n\right]}
\def\px{\frac{\partial}{\partial x}}
\def\py{\frac{\partial}{\partial y}}
\DeclareMathOperator*{\hash}{\#}
\newtheorem{lemma}{Lemma}[section]
\newtheorem{fact}[lemma]{Fact}
\newtheorem{thm}[lemma]{Theorem}
\newtheorem{cor}[lemma]{Corollary}
\newtheorem{prop}[lemma]{Proposition}
\newtheorem{question}[lemma]{Question}
\theoremstyle{definition}
\newtheorem{rmk}[lemma]{Remark}
\newtheorem{defin}[lemma]{Definition}
\newtheorem{nota}[lemma]{Notation}
\title{The Brown-Peterson Spectrum is not $\mathbb{E}_{2(p^2+2)}$ at odd primes}
\author{Andrew Senger}
\address{Department of Mathematics, Harvard University, Cambridge, MA, USA}
\email{senger@math.harvard.edu}
\begin{document}
\begin{abstract}
  We show that the odd-primary Brown-Peterson spectrum $\mathrm{BP}$ does not admit the structure of an $\mathbb{E}_{2(p^2+2)}$ ring spectrum and that there can be no map $\mathrm{MU} \to \mathrm{BP}$ of $\mathbb{E}_{2p+3}$ ring spectra. We also prove the same results for truncated Brown-Peterson spectra $\mathrm{BP} \langle n \rangle$ of height $n \geq 4$.
This extends results of Lawson at the prime $2$.
\end{abstract}

\maketitle

\setcounter{tocdepth}{1}
\tableofcontents

\section{Introduction}

Two important themes in modern homotopy theory are the study of structured ring spectra, such as $\mathbb{E}_\infty$ ring spectra, and chromatic homotopy theory, in which the $p$-primary Brown-Peterson spectrum $\mathrm{BP}$ plays a key role. In \cite{May}, May asked the following question:

\begin{question}
Does the Brown-Peterson spectrum admit the structure of an $\mathbb{E}_\infty$ ring spectrum?
\end{question}

This question has been seminal in the development of the theory of structured ring spectra. In an unpublished preprint \cite{Kriz}, Kriz developed the theory of topological Andr\'e-Quillen cohomology in an attempt to prove that $\mathrm{BP}$ does admit the structure of an $\mathbb{E}_\infty$ ring spectrum. While his attempt to apply his theory to $\mathrm{BP}$ did not ultimately succeed, a careful study of what exactly went wrong became the seed of a new attempt by Lawson to answer May's question in the negative; recently, this project reached maturity in Lawson's proof \cite{BPtwo} that $\mathrm{BP}$ does not admit an $\mathbb{E}_\infty$ multiplication at the prime $p=2$.

In this paper, we prove in \Cref{MainThm} that $\mathrm{BP}$ does not admit an $\mathbb{E}_\infty$ multiplication at odd primes. Our technique is akin to Lawson's and relies on the computation of a certain secondary power operation in the dual Steenrod algebra. The key input to this computation is the calculation of a certain $\mathrm{MU}$-power operation in $\mathrm{MU}_*$.

For further motivation and background, we refer the reader to the introduction of \cite{BPtwo}.

\subsection{Statement of results}
We prove two main results: one limiting the coherence of multiplicative structures on the Brown-Peterson spectrum and related spectra at odd primes, and another giving a stronger limitation on the coherence of complex orientations of such spectra.
The $2$-primary analogues of our result were proven by Lawson \cite[Theorem 1.1.2 and Remark 4.4.7]{BPtwo}.


\begin{thm}\label{MainThm}

	Let $p$ denote an odd prime. Neither the Brown-Peterson spectrum $\mathrm{BP}$, nor the truncated Brown-Peterson spectra $\BPn$ for $n \geq 4$, nor any of their $p$-adic completions admit the structure of an $\mathbb{E}_{2(p^2+2)}$ ring spectrum.

\end{thm}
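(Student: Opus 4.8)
The plan is to adapt Lawson's obstruction-theoretic method from \cite{BPtwo}, locating the failure of an $E_{2(p^2+2)}$-multiplication in mod $p$ homology by means of a secondary power operation in the dual Steenrod algebra. Since $\mathbb{F}_p$-homology is insensitive to $p$-completion, it is enough to rule out such a structure on $R$, where $R$ is $BP$ or $\BPn$ with $4 \le n < \infty$; these cases are treated uniformly. Recall that $H_*(R;\mathbb{F}_p)$ sits inside the dual Steenrod algebra $\mathcal{A}_* = \mathbb{F}_p[\xi_1, \xi_2, \dots] \otimes \Lambda(\tau_0, \tau_1, \dots)$ as the polynomial subalgebra $\mathbb{F}_p[\xi_1, \xi_2, \dots]$ when $R = BP$, and as $\mathbb{F}_p[\xi_1, \xi_2, \dots] \otimes \Lambda(\tau_{n+1}, \tau_{n+2}, \dots)$ when $R = \BPn$; the hypothesis $n \ge 4$ is exactly what makes $\tau_0, \dots, \tau_4$ absent from $H_*(R;\mathbb{F}_p)$. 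An $E_{2(p^2+2)}$-structure on $R$ equips $H_*(R;\mathbb{F}_p)$ with Dyer--Lashof operations $Q^j$ and a compatible Bockstein, natural in maps of $E_{2(p^2+2)}$-rings and satisfying the Adem and Nishida relations throughout the range permitted by that level of coherence.

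The constant $2(p^2+2)$ is chosen precisely so that this range is just large enough to define a certain secondary power operation $\Phi$ on homology classes of degree $2(p-1)$. I would construct $\Phi$ from a relation among composites of the $Q^j$ and $\beta$ that holds identically in the relevant range, so that $\Phi$ is defined --- with the usual indeterminacy --- on any class annihilated by the primary operations underlying it. The class to which it is applied is the bottom polynomial generator $\xi_1 \in H_{2(p-1)}(R;\mathbb{F}_p)$. One must check that those primary operations kill $\xi_1$: in $H_*(R;\mathbb{F}_p)$ this uses the polynomial (resp.\ polynomial-times-exterior) form of the algebra together with degree reasons and the hypothesis $n \ge 4$, while in $\mathcal{A}_*$ it follows from Steinberger's computation of the full Dyer--Lashof action on the dual Steenrod algebra. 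Thus $\Phi(\xi_1)$ is defined both in $H_*(R;\mathbb{F}_p)$ and in $\mathcal{A}_* = H_*(H\mathbb{F}_p;\mathbb{F}_p)$, and since the reduction $R \to H\mathbb{F}_p$ inherits the structure of a map of $E_{2(p^2+2)}$-rings, naturality forces the value of $\Phi$ on $\xi_1 \in H_*(R;\mathbb{F}_p)$ to be carried into the coset $\Phi(\xi_1) \subseteq \mathcal{A}_*$.

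The contradiction then comes from computing $\Phi(\xi_1)$ inside $\mathcal{A}_*$ and showing that this coset is disjoint from the image of $H_*(R;\mathbb{F}_p)$: concretely, one expects (as at $p = 2$) that it contains a term involving $\tau_4$, of degree $2p^4 - 1$, which by our assumption $n \ge 4$ (or, for $R = BP$, trivially) cannot lie in $H_*(R;\mathbb{F}_p)$ for any of the spectra under consideration. Following the outline in the introduction, the computation of $\Phi(\xi_1)$ is reduced --- by Toda-bracket manipulations, together with the fact that the $E_\infty$ complex orientation $MU \to H\mathbb{F}_p$ carries tom Dieck--Quillen power operations on $\pi_* MU$ to Dyer--Lashof operations on $\mathcal{A}_*$ --- to the evaluation of one such power operation on the polynomial generator of $\pi_* MU$ in degree $2(p^2-1)$. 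That evaluation I would carry out by generalizing the appendix of \cite{BPtwo} to odd primes: the splitting principle expresses the power operation in terms of the universal formal group law over the Lazard ring, reducing it to an explicit symmetric-function calculation whose answer one then tracks through $\pi_* MU \to \mathcal{A}_*$ and back through the bracket manipulations to read off the asserted component of $\Phi(\xi_1)$.

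The hard part, as in Lawson's work, is precisely this last step: isolating the correct secondary operation and the sharp instability bounds that pin the threshold at $2(p^2+2)$ rather than something larger; doing the bookkeeping with the exterior generators $\tau_i$ of $\mathcal{A}_*$, which have no analogue at $p = 2$ and make the odd-primary Adem and Nishida relations substantially more intricate; and executing the odd-primary version of the $\pi_* MU$ power-operation computation, where the combinatorics of $p$-th powers and the relevant congruences are genuinely more delicate than in the case $p = 2$ handled in \cite{BPtwo}.
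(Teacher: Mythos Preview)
Your outline is the paper's strategy: a secondary operation built from a relation among Dyer--Lashof operations, applied to $\xibar_1$, then juggled and Peterson--Stein'd into a primary Dyer--Lashof computation that is ultimately controlled by an $MU$ power operation. Two specifics are off, though, and they matter.

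First, the $MU$ power operation is not evaluated on a generator in degree $2(p^2-1)$. The relevant class is $\CP^{2(p-1)}$ (equivalently $-\tfrac{1}{2}N_{2(p-1)}(b)$ in homology), sitting in degree $4(p-1)$, and the key identity is $Q^{p^2+p-1}(\sigma\CP^{2(p-1)}) = \sigma x_{p^3-1}$ inside $\pi_*(H\wedge_{MU}H)$. The entire argument turns on this operation moving a class from a non-$(p^k-1)$ internal degree into a $(p^k-1)$ degree, so the source degree is not incidental. Second, the reduction does not run by having the orientation $MU\to H\FF_p$ carry tom Dieck--Quillen operations on $\pi_*MU$ directly to Dyer--Lashof operations on $\mathcal{A}_*$. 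Rather, the $MU$ power operations determine Dyer--Lashof operations in the $MU$-dual Steenrod algebra $\pi_*(H\wedge_{MU}H)$ (Section~4 of \cite{BPtwo}, extended here to odd primes), and the Peterson--Stein formula identifies the secondary bracket in $H_*H$---after it has been juggled into a functional operation for $H\wedge MU\to H\wedge H$---with one of those. Finally, you cannot verify that the primary operations kill $\xi_1$ in $H_*R$ ``by degree reasons and the polynomial form of the algebra'': the target degrees are nonzero, and the Dyer--Lashof action on $H_*R$ depends a priori on the hypothetical $E_{2(p^2+2)}$-structure. The paper instead uses injectivity of $H_*R\hookrightarrow H_*H$ in degrees $\le(2p^2+1)(p-1)$ together with the explicitly computed vanishing in $H_*H$.
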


We will prove \Cref{MainThm} at the end of Section \hyperref[secondary]{\ref*{secondary}}.


\begin{thm}\label{MUThm}

	Let $p$ denote an odd prime. Neither the Brown-Peterson spectrum $\mathrm{BP}$, nor the truncated Brown-Peterson spectra $\BPn$ for $n \geq 3$, nor any of their $p$-adic completions admit an $\mathbb{E}_{2p+3}$-map from the complex cobordism spectrum $\mathrm{MU}$.

\end{thm}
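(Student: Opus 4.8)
The plan is to show that an $E_{2p+3}$-map $MU \to X$ would transport enough of the $E_\infty$-power-operation structure of $MU$ onto $X$ through the $E_{2p+3}$-range, and then to contradict this by comparing a secondary power operation in the dual Steenrod algebra --- whose value on the $MU$ side is computed from the tom Dieck--Quillen power operation in $MU_*$ --- with the much more rigid behaviour such an operation must exhibit on $X$.

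First I would reduce to $X = \BPn$ with $3 \le n < \infty$. An $E_{2p+3}$-map from $MU$ to $BP$, or to the $p$-completion of $BP$ or of some $\BPn$ with $n \ge 3$, composes with the canonical truncation $BP \to BP\langle 3\rangle$ (and its $p$-completion) to give an $E_{2p+3}$-map to $BP\langle 3\rangle$ (or its $p$-completion); and since $\HF_p \otimes (-)$ is symmetric monoidal and does not see $p$-completion of a connective spectrum of finite type, it is enough to rule out an $E_{2p+3}$-map $f \colon MU \to \BPn$ for each finite $n \ge 3$. Smashing $f$ with $\HF_p$ and passing to homotopy yields an $\mathbb{F}_p$-algebra map $f_* \colon H_*(MU;\mathbb{F}_p) \to H_*(\BPn;\mathbb{F}_p)$ which commutes with every primary Dyer--Lashof operation and with every partially defined secondary power operation supported on $E_{2p+3}$-$\HF_p$-algebras; because $\HF_p \otimes MU$ is in fact $E_\infty$, the operations and their relations on $H_*(MU;\mathbb{F}_p) = \mathbb{F}_p[b_1,b_2,\dots]$ (with $|b_i| = 2i$) are completely understood, while $f_*$ is only forced to respect those visible at the $E_{2p+3}$-level. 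Inside $H_*(\BPn;\mathbb{F}_p) = \mathbb{F}_p[\xibar_1,\xibar_2,\dots]\otimes\Lambda(\bar\tau_{n+1},\bar\tau_{n+2},\dots) \subset \mathcal{A}_*$, the complex orientation underlying $f$ forces $f_*(b_{p-1})$ to be a unit multiple of $\xibar_1$ modulo decomposables.

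Next I would invoke the central computation. Using the $E_\infty$-structure and complex orientation of $MU$, the tom Dieck--Quillen power operation in $MU_*$ --- computed at $p=2$ in the appendix of \cite{BPtwo} and extended to odd primes by the generalization referred to in the introduction --- shows that a suitable primary power operation vanishes through the $E_{2p+3}$-range on a low-degree class $y$ built from $b_{p-1}$, so that a secondary power operation $\Phi$, assembled only from operations of index low enough to exist on an $E_{2p+3}$-algebra, is defined on $y$ and on $f_*(y)$; and it evaluates $\Phi(y)$, modulo decomposables and lower Milnor filtration, to a distinguished indecomposable of $H_*(MU;\mathbb{F}_p)$ in degree roughly $2(p^3-1)$. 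Transporting along $f_*$ makes $\Phi(f_*(y)) = f_*\Phi(y)$ indecomposable in $H_*(\BPn;\mathbb{F}_p)$; but computing $\Phi$ directly on $f_*(y)$, a unit multiple of $\xibar_1$ modulo decomposables, the primary operations that feed $\Phi$ land in the span of decomposables, so the entire indeterminacy coset of $\Phi(f_*(y))$ in the relevant degree consists of decomposable classes --- a contradiction. This works provided $n \ge 3$, so that the distinguished degree still supports the needed indecomposable; for $n \le 2$ the surviving exterior generator $\bar\tau_3 \in H_*(\BP\langle n\rangle;\mathbb{F}_p)$ leaves just enough room for the comparison to fail, which is the source of the hypothesis $n \ge 3$.

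The hard part will be the central computation: identifying the secondary power operation $\Phi$, checking that the $E_{2p+3}$-structure --- and nothing stronger --- already defines it together with every primary operation entering its construction (this is where the numeral $2p+3$, rather than a larger one, comes out of the degree count), and evaluating $\Phi(y)$ via the tom Dieck--Quillen power operation in $MU_*$, which is precisely the odd-primary analogue of the appendix of \cite{BPtwo}. The remaining ingredients --- the reduction to $\BPn$, the passage to $\HF_p$-homology, and the decomposability bookkeeping via the Nishida relations and the Milnor filtration on $\mathcal{A}_*$ --- are routine.
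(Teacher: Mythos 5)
There is a genuine gap at the heart of your plan: everything rests on a secondary power operation $\Phi$ in ordinary mod~$p$ homology that is (a) defined for $E_{2p+3}$-$H$-algebras, (b) takes a degree-$2(p-1)$ class to a distinguished indecomposable near degree $2(p^3-1)$, and (c) is evaluated by the tom Dieck--Quillen computation in $MU^*$ --- and you defer all three points as ``the hard part'' without a degree count or a relation. There is good reason to doubt they can be arranged at the $E_{2p+3}$ level: in the paper, the secondary-operation machinery in $H_*H$ fed by exactly this power-operation input (\Cref{POpThm}) is what proves \Cref{MainThm}, and it requires $E_{2(p^2+2)}$, because the defining relation needs the top operation $Q^{p^3+p}$ on a class of degree $2(p-1)(p^2+1)$; moreover its output is $\tau_4$ in degree $2p^4-1$, not a degree-$2(p^3-1)$ class --- the height-$3$ data ($v_3$, $x_{p^3-1}$) is pushed up a height by the extra $Q^{p^3}$ in the juggling step (\Cref{CompProp1}). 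The paper gets the much smaller bound $2p+3$ for \Cref{MUThm} precisely by \emph{avoiding} secondary operations: \Cref{En} (the bar construction) turns a hypothetical $E_{2p+3}$-map $MU \to BP$ into an $E_{2p+2}$-map $H\wedge_{MU}H \to H\wedge_{BP}H$ of $H$-algebras, and the contradiction comes from the primary operation $Q^{p^2+1}\sigma\CP^{p(p-1)} = -\sigma x_{p^3-1}$ of \Cref{DLMUSt}: the source $\sigma x_{p(p-1)}$ must map to zero in $\Lambda_{\FF_p}(\sigma v_k)$ for degree reasons, while $\sigma x_{p^3-1}$ must hit a unit multiple of $\sigma v_3$ because the collapsing K\"unneth spectral sequences force surjectivity on indecomposables. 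The suspension is what makes the excess small ($2(p^2+1)-2p(p-1)-1 = 2p+1$, whence $2p+2$ after the bar construction and $2p+3$ for the original map); no count in ordinary homology of the kind you sketch produces that numeral.

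Three further problems. First, your contradiction mechanism is not coherent as stated: the value of a secondary operation is not controlled by the decomposability of the primary operations feeding it (that is the whole point of secondary operations), and if what you mean is ``$\Phi(\xibar_1)$ does not lie in the image of $H_*(\BPn)$ modulo indeterminacy,'' that would obstruct $\BPn$ carrying \emph{any} $E_{2p+3}$-structure, a much stronger statement than \Cref{MUThm} which nothing in the available power-operation input supports. Second, your reduction of the $BP$ case to $BP\langle 3\rangle$ requires the truncation $BP \to BP\langle 3\rangle$ to be an $E_{2p+3}$-map, which is not known (it is not a Postnikov truncation); the paper instead runs the argument for $BP$ directly and observes that the $\BPn$ cases, $n\geq 3$, are analogous. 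Third, the hypothesis $n\geq 3$ enters because $v_3\in\pi_*\BPn$ is needed so that $\sigma v_3$ exists in $\pi_*(H\wedge_{\BPn}H)\cong\Lambda_{\FF_p}(\sigma v_k)$ as a target, not because of $\overline{\tau}_3$ in homology: the degree-$2(p^3-1)$ indecomposable $\xi_3$ lies in $H_*(\BPn)$ for every $n$, so your stated mechanism would not distinguish $n\leq 2$ from $n\geq 3$.
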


We will prove Theorem \hyperref[MUThm]{\ref*{MUThm}} at the end of Section \hyperref[primary]{\ref*{primary}}. Again, the $p=2$ case of this theorem is due to Lawson \cite[Remark 4.4.7]{BPtwo}.

Finally, we will prove in \Cref{cor:JNcor} the following result about the incompatibility of $p$-typical complex orientations with $\H_\infty$-structures:

\begin{cor}\label{cor:JNcorintro}
    Suppose $f : \MU_{(p)} \to E$ is a map of $\mathbb{H}_{\infty}$-ring spectra satisfying:
    \begin{enumerate}
        \item $f$ factors through the Quillen idempotent $\MU_{(p)} \to \BP$.
        \item $f$ induces a Landweber exact $\MU_*$-module structure on $E_*$.
    \end{enumerate}
    Then the induced formal group on $E_*$ has height at most $2$, i.e. $v_2$ is invertible in $E_* / (p,v_1)$.
\end{cor}

This corollary is a variation on a result of Johnson--Noel \cite[Theorem 1.3]{JN}, who prove the stronger result that $E_*$ must be a $\QQ$-algebra under the restriction $p \leq 13$. We suspect that $E_*$ must in fact be a $\QQ$-algebra for all primes $p$: see \Cref{qst:JN}.

\subsection{Outline of the paper}
In Section \hyperref[POp]{\ref*{POp}}, we carry out the computations of $\mathrm{MU}$-power operations that we will need. The main result \Cref{POp} is Theorem \hyperref[POpThm]{\ref*{POpThm}}. In Section \hyperref[DLMUStSec]{\ref*{DLMUStSec}}, we generalize results of \cite{BPtwo} to convert the $\mathrm{MU}$ power operations of Theorem \hyperref[POp]{\ref*{POp}} into Dyer-Lashof operations in $\pi_* (\HFp \wedge_{\mathrm{MU}} \HFp)$, thus obtaining Theorem \hyperref[DLMUSt]{\ref*{DLMUSt}}. At the end of this section, we apply these results to obtain Theorem \hyperref[MUThm]{\ref*{MUThm}}.

In Section \hyperref[DLRev]{\ref*{DLRev}}, we state some relations satisfied by the action of the Dyer-Lashof operations on $\mathrm{H}_* (\mathrm{MU};\FF_p)$ and $\mathrm{H}_* (\HFp; \FF_p)$. In Section \hyperref[SecRel]{\ref*{SecRel}}, we write down the relation defining the secondary operation of interest and show that it is defined on $\xibar_1 \in \mathrm{H}_* (\HFp; \FF_p)$. Finally, in Section \hyperref[compsecond]{\ref*{compsecond}}, we compute this secondary operation on $\xibar_1$ to be a nonzero multiple of $\tau_4$ modulo the $\xi_i$ by applying juggling formulae and a Peterson-Stein relation to reduce to Theorem \hyperref[DLMUSt]{\ref*{DLMUSt}}. We then deduce Theorem \hyperref[MainThm]{\ref*{MainThm}}.

\subsection{Questions}
Our work raises several interesting questions. While Theorems \hyperref[MainThm]{\ref*{MainThm}} and \hyperref[MUThm]{\ref*{MUThm}} provide upper bounds on the coherence of multiplicative structures on $\mathrm{BP}$ that are functions of $p$, the best known lower bounds \cite{E4}, which state that $\mathrm{BP}$ is an $\mathbb{E}_4$-algebra and admits an $\mathbb{E}_4$ orientation $\mathrm{MU} \to \mathrm{BP}$, do not depend on the prime $p$. So one is led to ask whether these coherence bounds are independent of $p$.

\begin{question}
	Let $\coh_{\mathrm{BP}} (p)$ denote the largest integer $n$ such that the $p$-primary $\mathrm{BP}$ admits the structure of an $\mathbb{E}_n$ ring spectrum. Is $\coh_{\mathrm{BP}} (p)$ constant in $p$? If not, how does it vary with $p$?
\end{question}

In another direction, we may ask about $\mathbb{E}_\infty$ structures on the truncated Brown-Peterson spectra $\mathrm{BP}\<n\>$. While Theorem \hyperref[MainThm]{\ref*{MainThm}} rules out the possibility of such structures for $n \geq 4$, the only known positive results state that $\mathrm{BP}\<1\>$ always admits an $\mathbb{E}_\infty$ structure (since it is the Adams summand) and that $\mathrm{BP}\<2\>$ admits an $\mathbb{E}_\infty$ structure at the primes $2$ and $3$ \cite{HL,LN}. What about the remaining cases?

\begin{question}\label{BP2Q}
	At which of the primes $p \geq 5$ does the height $2$ truncated Brown-Peterson spectrum $\mathrm{BP}\<2\>$ admit an $\mathbb{E}_\infty$ multiplication?
\end{question}

\begin{question}\label{BP3Q}
	At which primes does the height $3$ truncated Brown-Peterson spectrum $\mathrm{BP}\<3\>$ admit an $\mathbb{E}_\infty$ multiplication?
\end{question}


\begin{rmk}
	The above questions are not quite well-defined: there are many generalized truncated Brown-Peterson spectra $\mathrm{BP}\<n\>$ which are not a priori equivalent. However, Angeltveit and Lind \cite{addBPn} have shown that all choices of $\mathrm{BP}\<n\>$ are equivalent as spectra after $p$-completion, so that \Cref{BP2Q} and \Cref{BP3Q} are well-defined after $p$-completion.
\end{rmk}

Finally, we believe that the following strengthening of \Cref{cor:JNcorintro} holds:

\begin{question} \label{qst:JN}
  Can \Cref{cor:JNcorintro} be strengthened so that the conclusion is that $E_*$ is a $\QQ$-algebra? Equivalently, can the small primes condition be removed from \cite[Theorem 1.3]{JN}?
\end{question}

\subsection{Conventions}

We work throughout at a fixed odd prime $p$. We will let $\mathrm{H}$ denote the mod $p$ Eilenberg-MacLane spectrum $\HFp$ and let $\mathrm{H}_* (X)$ denote mod $p$ homology.

We let $F$ denote the universal formal group law, defined over $\MU_*$.

In terms of foundations, we follow \cite[Section 1.6]{BPtwo}, with the following caveat.
In the proof of \Cref{En}, we will allow ourselves to work freely with the language of $\infty$-categories and the notion of $\mathbb{E}_n$-ring native to this setting, as developed by Lurie \cite{HTT,HA}. To translate between Lawson's framework and ours, we pass to the underlying $\infty$-category of the model categories considered by Lawson. The compatibility of this procedure with multiplicative structures is justified by \cite[Theorem 7.10]{rect}.



\subsection{Generators of the homology and homotopy of $\mathrm{MU}$}

For the convenience of the reader, we review the relations between various sets of elements of $\pi_* (\mathrm{MU})$, $\mathrm{H}_* (\mathrm{MU}; \ZZ)$ and $\pi_* (\mathrm{MU}) \otimes \QQ$ that we will need to make use of.

The integral homology $\mathrm{H}_* (\mathrm{MU}; \ZZ)$ is generated by elements $b_i$ which are the images of the duals of $c_1 ^i$ under $\mathrm{H}_* (\CP^\infty; \ZZ) \to \mathrm{H}_* (BU; \ZZ) \cong \mathrm{H}_* (\mathrm{MU}; \ZZ)$. If we define the Newton polynomials in $b_i$ inductively by $N_1 (b) = b_1$ and $$N_n(b) = b_1 N_{n-1} (b) - b_2 N_{n-2} (b) + \dots + (-1)^{n-2} b_{n-1} N_1(b) + (-1)^{n-1} n b_n,$$ then $N_n (b)$ generates the group of primitive elements in $\mathrm{H}_{2n} (\mathrm{MU}; \ZZ)$. Furthermore, $N_n (b) \equiv (-1)^{n-1} n b_n$ modulo decomposables. As we will see in \Cref{DLRev}, there are convenient formulae for the action of the Dyer-Lashof operations on $N_n (b)$.

The homotopy $\pi_* (\mathrm{MU})$ of $\mathrm{MU}$ is generated by elements $x_i$ whose images under the Hurewicz map are $h(x_i) \equiv p b_i$ modulo decomposables when $i = p^n-1$ for some prime $p$ and $h(x_i) \equiv b_i$ modulo decomposables otherwise.

We may view the corbordism class of $\CP^n$ as an element $\bCPn$ of $\pi_{2n} (\mathrm{MU})$. Then, the $\bCPn$ do not generate $\pi_* (\mathrm{MU})$, though they are generators of $\pi_* (\mathrm{MU}) \otimes \QQ$. Under the isomorphism $\pi_* (\mathrm{MU}) \otimes \QQ \cong \mathrm{H}_* (\mathrm{MU}; \QQ)$ induced by the Hurewicz map, $\bCPn \equiv -(n+1) b_n$ modulo decomposables.

The logarithm of of the universal formal group $F$ on $\pi_* (\mathrm{MU})$ may be expressed in terms of the elements $\bCPn$: $$\ell_{F} (x) = \sum \frac{\left[\CP^{n-1}\right] x^n}{n}.$$


\subsection{When are the Dyer-Lashof operations defined?}

To obtain the precise bounds on $\mathbb{E}_n$ structures of \Cref{MainThm} and \Cref{MUThm}, we need to know when a Dyer-Lashof operation $Q^k$ is defined on an element $x \in \pi_n R$ for $R$ an $\mathbb{E}_n$-$\mathrm{H}$-algebra.

\begin{thm}[{\hspace{1sp}\cite[Theorems III.3.1 and III.3.3]{Hinf}}]\label{DLWhen}
	Let $R$ be an $\mathbb{E}_n$-$\mathrm{H}$-algebra. Then the operation $Q^s$ is defined on an element $x \in \pi_* R$ when $2s- \mathrm{deg}(x) \leq n - 1$; however, these operations only satisfy the expected properties (e.g. linearity, Cartan formula) when $2s - \mathrm{deg}(x) \leq n-2$.
\end{thm}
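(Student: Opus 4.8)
The statement is quoted from \cite{Hinf}, so the task is to indicate how one reconstructs the proof from the operadic structure; here is the plan. Write $\mathcal{C}_n$ for the little $n$-cubes operad. An $E_n$-$H$-algebra $R$ is in particular an algebra over $\mathcal{C}_n$ in $H$-modules, hence comes with compatible structure maps $\theta_j \colon D_j R \to R$, where $D_j M := \mathcal{C}_n(j)_+ \wedge_{\Sigma_j} M^{\wedge_H j}$ is the $j$-th $H$-linear extended power. Only $j = p$ is needed to produce the operations $Q^s$, and for these one may restrict attention to the cyclic subgroup $C_p \subset \Sigma_p$ (their refinement by the full symmetric group and the Nishida relations would use more of $\Sigma_p$). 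Given $x \in \pi_d R$, I would choose a representative $\widetilde x \colon \Sigma^d H \to R$ of $H$-modules and form
\[ \Phi_x \colon D_p(\Sigma^d H) \xrightarrow{\ D_p \widetilde x\ } D_p R \xrightarrow{\ \theta_p\ } R . \]
Every homotopy class of the source is then pushed to a class of $\pi_* R$, and the Dyer--Lashof operations on $x$ are, by definition, the images of a standard family of such classes.

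The heart of the matter is therefore a computation of $\pi_* D_p(\Sigma^d H)$, equivalently of the $\FF_p$-homology of the Thom spectrum $\mathcal{C}_n(p)_+ \wedge_{\Sigma_p} S^{d\rho}$ over $\mathcal{C}_n(p)/\Sigma_p$ (here $\rho$ is the permutation representation, so that $(\Sigma^d H)^{\wedge_H p} \simeq \Sigma^{d\rho} H$ as $\Sigma_p$-spectra). In the $E_\infty$ case this recovers the classical $H_*(D_p S^d; \FF_p) \cong \Sigma^{pd} H_*(B\Sigma_p; \FF_p^{\pm})$ (the coefficients twisted by the sign when $d$ is odd), which is spanned over $\FF_p$ by the classes realizing $Q^s x$ and $\beta Q^s x$ for $2s \ge d$, the class for $Q^s x$ occupying ``$B\Sigma_p$-degree'' $(2s-d)(p-1)$. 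For finite $n$ I would use that $\mathcal{C}_n(p)$ is $\Sigma_p$-equivariantly the configuration space $F(\RR^n,p)$, and invoke the Cohen--Lada--May computation of $H_*(F(\RR^n,p)/\Sigma_p;\FF_p)$ recalled in \cite{Hinf}: it shows that, along the map to $B\Sigma_p$, precisely the classes of $B\Sigma_p$-degree at most $(n-1)(p-1)$ are carried over. Thus $Q^s x$ (and $\beta Q^s x$) acquires a well-defined value exactly when $(2s-d)(p-1) \le (n-1)(p-1)$, i.e. when $2s - d \le n-1$; below the bottom ($2s < d$) the relevant class is absent and one gets $0$, at $2s = d$ one recovers $x^p$, and for $2s - d > n-1$ there is nothing to push forward. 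Independence of the choices of $\widetilde x$ and of the splitting of the relevant $H$-module cell off $D_p(\Sigma^d H)$ follows from naturality of $D_p$ and the homotopy-uniqueness of $\widetilde x$. The main obstacle in this half is the configuration-space homology computation itself, which is where the exact numerical bound $(n-1)(p-1)$ is established.

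It remains to see that, once $2s - d \le n-2$, the operations obey the usual identities: additivity, the Cartan formula, the instability relation, the Adem relations, and --- using the full $\Sigma_p$ --- the Nishida relations. As in the $E_\infty$ case, each is to be deduced by realizing it as the image, under a suitable map of external or iterated extended powers over $\mathcal{C}_n$ encoding multiplicativity, iterated $p$-th powers, or the Steenrod action, of a homological identity valid in $H_* D_p(S^d)$, $H_* D_p(S^d \wedge S^e)$, or $H_* D_p D_p(S^d)$. The hard part here is the bookkeeping that pins down the loss of exactly one dimension: over $\mathcal{C}_n$ these comparison maps agree with their $E_\infty$ counterparts only one $B\Sigma_p$-degree below the top, and the discrepancy at the top is governed by the $(n-1)$-st Browder operation $\lambda_{n-1}$, which is generally nonzero for $\mathcal{C}_n$-algebras and enters as a correction term in the Cartan and Adem relations precisely for the top operation $2s-d = n-1$. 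Carrying these corrections through --- the technical core of \cite{Hinf}, resting on Cohen's analysis of the homology of $\mathcal{C}_n$-spaces --- shows that the relations hold without correction as soon as $2s - d \le n-2$, while the top operation, though defined, need not be compatible with the algebraic structure.
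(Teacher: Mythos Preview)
The paper does not give its own proof of this theorem: it is stated purely as a citation of Theorems~3.1 and~3.3 of \cite{Hinf} and used as a black box thereafter. There is therefore nothing in the present paper to compare your argument against.

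That said, your outline is a faithful reconstruction of the standard argument one finds in the cited source (ultimately due to F.~Cohen, as developed in Cohen--Lada--May and carried into \cite{Hinf}): build the operations from the structure map $D_p R \to R$ over the little $n$-cubes operad, read the range of definition off the mod~$p$ homology of $F(\RR^n,p)/\Sigma_p$, which is the truncation of $H_*(B\Sigma_p;\FF_p^{\pm})$ at ``$B\Sigma_p$-degree'' $(n-1)(p-1)$, and then observe that the comparison maps governing additivity, the Cartan formula, and the Adem relations acquire Browder-bracket correction terms $\lambda_{n-1}$ precisely at the top operation $2s-d=n-1$, so that the identities hold without correction one step below. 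Nothing in your sketch is wrong, and the two obstacles you flag---the configuration-space homology computation for the existence bound, and Cohen's analysis of the $\lambda_{n-1}$-corrections for the relations---are indeed where all the work lies in \cite{Hinf}.
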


\subsection{Acknowledgements} The author would like to thank Tyler Lawson for explanations of his work \cite{BPtwo} and for introducing him to the world of homotopy theory. He would also like to thank his advisor, Haynes Miller, for helpful conversations about this work. He would further like to thank them for providing useful comments on a draft of this paper.

While revising this work, the author was supported by an NSF GRFP fellowship under Grant No. 1745302.

\section{Power operations in the homotopy of $\mathrm{MU}$}\label{POp} \label{primary}

\subsection{Statement of results}

Our goal in this section is to compute certain power operations in the homotopy of $\MU$ which will form the starting point of our proof that $\BP$ does not admit the structure of a $\mathbb{E}_{2(p^2+2)}$-ring.

We begin by recalling that the $\mathbb{H}_\infty ^2$-structure on $\mathrm{MU}$ equips the even $\mathrm{MU}$-cohomology of a space $X$ with a power operation $$P_{C_p} : \mathrm{MU}^{2*} (X) \to \mathrm{MU}^{2p*} (X \times B C_p).$$ Using the isomorphism $$\mathrm{MU}^* (B C_p) \cong \mathrm{MU}^* [[\alpha]]/[p]_F (\alpha),$$ we may view this power operation applied to $X=\ast$ a point as a map $$P_{C_p} : \mathrm{MU}^{2*} \to \mathrm{MU}^{2p*} [[\alpha]] / [p]_F (\alpha).$$

Let
\[r_* : \mathrm{MU}^* [[\alpha]] / [p]_F (\alpha) \to \mathrm{BP}^* [[\alpha]] / [p]_F (\alpha)\] denote the map induced by the Quillen idempotent. Our goal in this section will be to compute the composition of $r_* \circ P$ applied to certain elements of $\MU^{2*}$. We begin with the following piece of notation.

\begin{nota}
	Let $$\chi = \prod_{i=1}^{p-1} [i]_F (\alpha) \in \mathrm{MU}^{*} (BC_p) \cong \mathrm{MU}^* [[\alpha]] / [p]_F (\alpha)$$ denote the $\mathrm{MU}$-Euler class of the real reduced regular representation of $C_p$.
\end{nota}


\begin{thm}\label{POpThm}
    The follow equality holds modulo $\BP^*$-decomposables:
\begin{align} \label{eq:MU2}
    r_* \left( \chi^{p(p-1)} P_{C_p} \left(\left[\CP^{p(p-1)}\right]\right) \right) \equiv - v_3 \alpha^{p^3 - 1 - p(p-1)} + O(\alpha^{p^3})
\end{align}
\end{thm}


We may deduce the following corollary.

\begin{cor}\label{cor:JNcor}
    Suppose $f : \MU_{(p)} \to E$ is a map of $\mathbb{H}_{\infty}$-ring spectra satisfying:
    \begin{enumerate}
        \item $f$ factors through the Quillen idempotent $\MU_{(p)} \to \BP$.
        \item $f$ induces a Landweber exact $\MU_*$-module structure on $E_*$.
    \end{enumerate}
    Then the induced formal group on $E_*$ has height at most $2$, i.e. $v_2$ is invertible in $E_* / (p,v_1)$.
\end{cor}

This corollary is similar to \cite[Theorem 1.3]{JN}, which differs from it in the following respect: \cite[Theorem 1.3]{JN} shows the stronger result that $E_*$ is a $\QQ$-algebra, but only for primes $p \leq 13$.

\begin{proof}[Proof of \Cref{cor:JNcor}]
    By \cite[Theorem 1.3]{JN}, we may as well assume that $p > 2$.

    The map $\MU \to E$ automatically acquires an $\mathbb{H}^2 _{\infty}$-structure by \cite[Theorem 3.13]{JN}.
    Since $\left[\CP^{p(p-1)}\right]$ maps to zero under the Quillen idempotent and thus under $f_*$, it follows that \[P_{C_p} \left(\chi^{p(p-1)} \left[ \CP^{p(p-1)}\right]\right)\] maps to zero in $E_* [[\alpha]] / [p](\alpha)$.
    It follows from \Cref{POpThm} that
    \[r_* \left( \chi^{p(p-1)} P_{C_p} \left(\left[\CP^{p(p-1)}\right]\right) \right) \equiv -v_3 \alpha^{p^3 - 1 - p(p-1)} + O(\alpha^{p^3}) \mod (p,v_1,v_2).\]
    Since $[p] (\alpha) \equiv v_3 \alpha^{p^3} + O(\alpha^{p^3+1}) \mod (p,v_1,v_2)$, the above implies that $v_3 = 0 \in E_*/ (p,v_1,v_2)$
    Now, $v_3$ is regular in $E_* / (p,v_1,v_2)$ by Landweber exactness, so that we must have $E_* /(p,v_1,v_2) = 0$, as desired.
\end{proof}

We begin the proof of \Cref{POpThm} with a reduction. Since we are working modulo $\BP^*$-decomposables, the coefficient of $\alpha^{p^3-1-p(p-1)}$ in (\ref{eq:MU2}) is equivalent to a constant multiple of $v_3$. Moreover, this is the first term in (\ref{eq:MU2}) that can be nonzero modulo $\BP^*$-decomposables. It therefore suffices to show that and (\ref{eq:MU2}) holds after composing with the map $q : \mathrm{BP}^* \to \ZZ_p [v_3]/(v_3 ^2)$ that sends $v_3$ to $v_3$ and $v_i$ to $0$ for $i \neq 3$. Here, we let $v_i$ denote the $i$th Hazewinkel generator. In conclusion, to prove Theorem \hyperref[POpThm]{\ref*{POpThm}} it suffices to prove the following proposition.


\begin{prop}\label{PowerProp}
	There is an equality
    $$q \circ r_* \left( \chi^{p(p-1)} P_{C_p} \left(\left[\CP^{p(p-1)}\right]\right) \right) = -v_3 \alpha^{p^3-1 - p(p-1)}.$$
\end{prop}

In the appendix of \cite{BPtwo}, Lawson shows how this computation may be made internally to $\ZZ_p [v_3] / (v_3 ^2)$ and the induced formal group law by adapting a method of Johnson--Noel \cite{JN}. Since this formal group law is much simpler than the formal group law of $\mathrm{BP}$, the computation that we need to make simplifies dramatically and so becomes tractable.

\subsection{Proof of \Cref{PowerProp}}

    We begin by reviewing some basic facts about $MU$-power operations. This section is based on \cite[Appendix A]{BPtwo}.
    \begin{nota}
        We let $$\<p\>_F (x) = \frac{[p]_F (x)}{x}.$$
    \end{nota}

    \begin{fact} \label{prop:properties}
        The power operation $P_{C_p} : \mathrm{MU}^{2*} (X) \to \mathrm{MU}^{2p*} (X) [[\alpha]] / [p]_F (\alpha)$ satisfies the following properties:

	\begin{enumerate}
        \item $P_{C_p}(uv) = P_{C_p}(u)P_{C_p}(v)$

        \item $P_{C_p}(u) = u^p$ modulo $\alpha$

        \item $P_{C_p}(u + v) = P_{C_p}(u) + P_{C_p}(v)$ modulo $\<p\>_F (\alpha)$

        \item On the orientation class $x \in \widetilde{\mathrm{MU}}^{2} (\CP^\infty)$, $$P_{C_p}(x) = x\prod_{i=1}^{p-1} (x +_F [i]_F (\alpha)).$$
	\end{enumerate}
%
    \end{fact}
    
    \begin{nota}
        We let
	$$g(x,\alpha) = x\prod_{i=1}^{p-1} (x +_F [i]_F (\alpha)),$$
        viewed as an element of $\MU^* [[x,\alpha]]/[p]_F (\alpha)$, so that $P_{C_p} (x) = g(x,\alpha)$.
        Note that $\px g(0,\alpha) = \chi.$
    \end{nota}

    Applying \Cref{prop:properties} to the spaces $X = (\CP^\infty)^{\times n}$ for $n=0,1,2$ and the product map $(\CP^\infty)^{\times 2} \to \CP^\infty$, we obtain the following proposition:

    \begin{prop}
    The composite
        $$\Psi : \mathrm{MU}^* \to \mathrm{MU}^* [[\alpha]] / \<p\>_F (\alpha)$$
        of $P_{C_p}$ with the quotient map $\mathrm{MU}^* [[\alpha]] / [p]_F (\alpha) \to \mathrm{MU}^* [[\alpha]] / \<p\>_F (\alpha)$ is a ring homomorphism.
        Moreover, the power series $g(x,\alpha)$ defines an isogeny $F \to \Psi^* F$.
    \end{prop}

    Let $\omega \in \ZZ_p$ denote a $(p-1)$st root of unity. We will find it convenient to express $g(x,\alpha)$ and $\chi$ in terms of $[\omega^i]_{F} (\alpha)$ instead of $[i]_{F} (\alpha)$, where $i = 1, \dots, p-1$ on both sides. This is because we will eventually replace $F$ with a $p$-typical formal group law $G$, and for any $p$-typical $G$ we have the simple formula $[\omega^i]_{G} (x) = \omega^i x$.

    To make sense of this, we must base change to the $p$-completion $\mathrm{MU}^* _{p}$.
    When base changed to $\MU^* _p$, the formal group law $F$ admits the structure of a formal $\ZZ_p$-module.
    In particular, if we let $\omega \in \ZZ_p$ denote a primitive $(p-1)$st root of unity, there are endomorphisms $[\omega^i]_F (x)$ of $F$.
    Since $\omega^1, \dots, \omega^{p-1}$ form a set of representatives for $1, \dots, p-1$ modulo $p$, we obtain the following lemma:
    \begin{lemma}
        There are equalities
	\begin{align*}
    \chi \equiv \prod_{i=1} ^{p-1} [\omega^i]_F (\alpha) \mod [p]_F (\alpha)
	\end{align*}
	and 
	\begin{align*}
    g(x,\alpha) \equiv x\prod_{i=1}^{p-1} (x +_F [\omega^i]_F (\alpha)) \mod [p]_F (\alpha).
	\end{align*}
    \end{lemma}

    Since $\mathrm{MU}^*$ and $\mathrm{MU}^* [[\alpha]] / \<p\>_F (\alpha)$ are torsion-free, $F$ and $\Psi^* F$ admit logarithms $$\ell_F (x) = \sum \frac{\left[\CP^{n-1}\right] x^n}{n}$$ and $$\ell_{\Psi^* F} (x) = \sum \frac{\Psi\left(\left[\CP^{n-1}\right]\right) x^n}{n}.$$

    This implies that we may compute $\Psi(\bCPn)$ as the coefficient of $x^n$ in the derivative $\ell' _{\Psi^* F} (x)$ of $\ell_{\Psi^* F} (x)$ with respect to $x$. We will now describe a method for computing these coefficients. We begin with a lemma.

    \begin{lemma} \label{lem:chi-alpha}
        Let $R^*$ denote a nonzero graded torsion-free ring and let $r : \MU^* _p \to R^*$ denote a map classifying a formal group law $G$ over $R^*$. Then 
        \[ r(\chi) = \prod_{i=1}^{p-1} [\omega^i]_{G} (\alpha)\]
        may be written as $u \alpha^{p-1}$, where $u$ is a unit in $R^*$. Moreover, $\alpha$ is not a zero divisor in $R^* [[\alpha]]/\<p\>_{G} (\alpha)$, so neither is $\chi$.
    \end{lemma}

    \begin{proof}
        We have
        \begin{align*}
            r(\chi) &= \prod_{i=1} ^{p-1} [\omega^{i}]_G (\alpha) \\
            &= \prod_{i=1} ^{p-1} (\omega^i \alpha + O(\alpha^2)) \\
            &= \alpha^{p-1} (-1 + O(\alpha)),
        \end{align*}
        which implies that $r(\chi) = u\cdot \alpha^{p-1}$ for a unit $u$. 

        It remains to show that $\alpha$ is not a zero-divisor in $R^* [[\alpha]]/\<p\>_{G} (\alpha)$.
        Suppose that $\alpha \cdot f(\alpha) = g(\alpha) \cdot \<p\>_G (\alpha)$. We wish to show that $\alpha$ must divide $g(\alpha)$, or in other words that $g(\alpha)$ has trivial constant term. But this follows from the fact that $\<p\>_{G} (\alpha)$ has constant term $p$, which is not a zero divisor in $R^*$.
    \end{proof}

    \begin{defin}
        We fix an arbitrary lift $\Psi(\bCPn) \in \mathrm{MU}^* [[\alpha]]$ of $\Psi(\bCPn) \in \MU^* [[\alpha]]/\<p\>_{F} (\alpha)$. This determines a lift of $\ell_{\Psi^* F} (x)$ to $\mathrm{MU}^* [[x,\alpha]]$.

        We also fix a lift of $g(x, \alpha)$ to $\mathrm{MU}^* _p [[x,\alpha]]$. Then $\px g(0,\alpha)$ is a lift of $\chi$ to $\mathrm{MU}^* _p [[\alpha]]$.
    \end{defin}
    
    \begin{nota}\label{ntn:k}
        Define $k(y,\alpha)$ by $g(\chi y,\alpha) = \chi^{2} k(y,\alpha)$. Then $k(y,\alpha)$ has leading term $y$, so we may let $k^{-1} (y,\alpha)$ denote a composition inverse in the $y$ variable.

        Moreover, let $\ell' _{F} (x) = \px \ell_{F} (x)$, $\ell' _{\Psi^* F} (x) = \px \ell_{\Psi^* F} (x)$, $k'(y, \alpha) = \py k(y,\alpha)$ and $(k^{-1})' (y,\alpha) = \py k^{-1} (y,\alpha)$.
    \end{nota}

    \begin{prop}\label{prop:Comp<p>}
        Let $f_n (\alpha)$ denote the coefficient of $y^n$ in 
        \[ \ell'_{F} (\chi k^{-1} (y,\alpha)) \cdot (k^{-1})' (y,\alpha).\]
        Then
        \[\Psi(\bCPn) \chi^{2n} \equiv f_n (\alpha) \mod \<p\>_{F} (\alpha).\]
    \end{prop}

    \begin{proof}
	Applying $\py$ to the equation
    $$g(x,\alpha) +_{\Psi^* F} g(y,\alpha) \equiv g(x +_F y, \alpha) \mod \< p \> (\alpha)$$
    and evaluating at $y=0$, we obtain the equation
	\begin{align*}
        \frac{g'(0,\alpha)}{(\ell_{\Psi^* F})' (g(x,\alpha))} \equiv \frac{g'(x,\alpha)}{(\ell_F)' (x)} \mod \<p\>_F (\alpha).
	\end{align*}
        This implies that $$g'(x,\alpha) \cdot (\ell_{\Psi^* F})' (g(x,\alpha)) = \chi \cdot (\ell_F)' (x) + h(x,\alpha) \cdot \<p\>_{F} (\alpha)$$ for some $h(x,\alpha) \in \mathrm{MU}^* _p [[x,\alpha]]$. In the above equation, we have used the fact that $\chi = g'(0,\alpha)$. Plugging in $x=0$, we find that $h(0,\alpha) = 0$, so that $h(x,\alpha) = x \widetilde{h}(x,\alpha)$ for some $\widetilde{h}(x,\alpha) \in \mathrm{MU}^* _p [[x,\alpha]]$.

        Next, we make the substitution $x = \chi y$ and write $g(\chi y, \alpha) = \chi^2 k(y,\alpha)$ as in \Cref{ntn:k}. Plugging in our substitution, we obtain
	\begin{align*}
        \chi \cdot k'(y,\alpha) \cdot (\ell_{\Psi^* F})' (\chi^2 k(y,\alpha)) &= \chi \cdot (\ell_F)' (\chi y) + h(\chi y, \alpha) \cdot \<p\>_{F} (\alpha) \\
        &= \chi \cdot (\ell_F)' (\chi y) + \chi y \cdot \widetilde{h}(\chi y, \alpha) \cdot \<p\>_{F} (\alpha).
	\end{align*}
        Substituting $y = k^{-1} (z,\alpha)$, applying the chain rule and dividing by $\chi$ (which is valid by \Cref{lem:chi-alpha}), we obtain
	\begin{align*}
    (\ell_{\Psi^* F})' (\chi^2 z) = (\ell_F)' (\chi k^{-1} (z, \alpha)) \cdot (k^{-1})' (z,\alpha) + k^{-1} (z,\alpha)\cdot \widetilde{h} (\chi k^{-1} (z,\alpha),\alpha) \cdot \<p\>_F (\alpha).
	\end{align*}
	Taking coefficients of $z^n$ on both sides, we find that
	\begin{align*}
		\Psi(\CP^n) \chi^{2n} = f_n (\alpha) + \widetilde{h}_n (\alpha) \cdot \<p\>_F (\alpha)
	\end{align*}
	for some $\widetilde{h}_n (\alpha) \in \mathrm{MU}^* _p [[\alpha]]$, as desired.
    \end{proof}
    

    Finally, to compute $P_{C_p} (\bCPn)$, we have the following proposition.
    
       \begin{prop} \label{prop:method-MU}
        There exists a unique polynomial $h_n (\alpha) \in \MU^* [\alpha]$ of degree $2n(p-1)$ with the property that
        $$f_n (\alpha) - h_n (\alpha) \cdot \<p\>_F (\alpha) \equiv \chi^{2n} \bCPn^p \mod \alpha^{2n(p-1)+1}.$$
        Furthermore,
        \[ P_{C_p} (\bCPn) \equiv \chi^{-2n} (f_{n} (\alpha) - h_n (\alpha) \cdot \<p\>_F (\alpha)) \mod [p]_{F} (\alpha).\]
    \end{prop}

    \begin{proof}
        By \Cref{prop:Comp<p>}, \[f_n (\alpha) \equiv \chi^{2n} \Psi(\bCPn) \equiv \chi^{2n} P_{C_p} (\bCPn) \mod \<p\>_{F} (\alpha).\] By \Cref{prop:properties}(2), this implies that \[f_n (\alpha) \equiv \bCPn^p \mod (\<p\>_F (\alpha), \chi^{2n}\alpha).\]
        Combining the above with \Cref{lem:chi-alpha}, we find that $h_n (\alpha)$ exists. Uniqueness follows from the fact that the constant term $p$ of $\<p\>_F (\alpha)$ is not a zero divisor in $\MU^* _p$.

        In particular, we find that $f_n (\alpha) - h_n (\alpha) \cdot \<p\>_F (\alpha)$ is divisible by $\chi^{2n}$ and that
        \[\chi^{-2n} (f_n (\alpha) - h_n (\alpha) \cdot \<p\>_F (\alpha)) \equiv \bCPn^p \equiv P_{C_p} (\bCPn) \mod \alpha\]
        and
        \[\chi^{-2n} (f_n (\alpha) - h_n (\alpha) \cdot \<p\>_F (\alpha)) \equiv \Psi (\bCPn) \equiv P_{C_p} (\bCPn) \mod \<p\>_F (\alpha).\]
        Again using the fact that $p$ is not a zero divisor in $\MU^* _p$, this implies that 
        \[\chi^{-2n} (f_n (\alpha) - h_n (\alpha) \cdot \<p\>_F (\alpha)) \equiv P_{C_p} (\bCPn) \mod [p]_F (\alpha),\]
        as desired.
    \end{proof}

	Suppose now that we are given a graded torsion-free ring $R^*$ and a homomorphism $r : \mathrm{MU}^* _p \to R^*$ classifying a formal group law $G$ over $R^*$.
    Then we may define $\chi_{G}$, $g_{G} (x,\alpha)$, $k_G (x, \alpha)$, $k^{-1} _G (x, \alpha)$ and $f_n ^{G} (\alpha)$ as above, using the formal group law $G$ on $R^*$ in place of the formal group law $F$ over $\MU^*$.

    \begin{prop} \label{prop:method-R}
        Let $R^*$ denote a graded torsion-free ring, and let $r : \MU^* _p \to R^*$ classify a formal group law $G$ over $R^*$. Then there exists a unique polynomial $h^G _n (\alpha) \in R^* [\alpha]$ of degree $2n(p-1)$ with the property that
        \[ f^G _n (\alpha) - h^G _n (\alpha) \cdot \<p\>_G (\alpha) \equiv \chi^{2n} \bCPn^p \mod \alpha^{2n(p-1)+1}.\]
        Moreover,
        \[ r(P_{C_p} (\bCPn)) \equiv \chi^{-2n} (f^G _{n} (\alpha) - h^G _n (\alpha) \cdot \<p\>_G (\alpha)) \mod [p]_{G} (\alpha).\]
    \end{prop}

    \begin{proof}
        The first part follows exactly as in the proof of \Cref{prop:method-MU}.

        For the second part, we note that $f^G _n (\alpha) = r(f_n (\alpha))$ by the definitions, and that $h^G _n (\alpha) = r (h_n (\alpha))$ by uniqueness. The second part then follows from \Cref{prop:method-MU}.
    \end{proof}


%

    \begin{prop} \label{prop:bigcalc1}
        Consider the map $q \circ r_* : \mathrm{MU}^* _p \to \ZZ_p [v_3] / (v_3 ^2)$ and its induced formal group law $G = (q \circ r_*)^* F$. Then the following hold:
        \begin{enumerate}
            \item $\displaystyle \ell_G (x) = x + \frac{v_3}{p} x^{p^3}.$
            \item $\displaystyle x +_G y = x+y+\frac{v_3}{p}(x^{p^3} + y^{p^3} - (x+y)^{p^3}).$
            \item $\displaystyle[p]_G (\alpha) = p\alpha - (p^{p^3-1}-1) v_3 \alpha^{p^3},$ so that $\displaystyle \<p\>_G (\alpha) = p - (p^{p^3-1}-1) v_3 \alpha^{p^3-1}.$
            \item $\displaystyle \chi_G = \prod_{i=1} ^{p-1} \omega^i \alpha = -\alpha^{p-1}.$
            \item $g_G (x, \alpha) \equiv \chi x + x^p + O(x^{p^2}) \mod [p]_G (\alpha).$
            \item $k_G (y, \alpha) \equiv y + \chi^{p-2} y^p + O(y^{p^2}) \mod [p]_G (\alpha).$
            \item $\displaystyle k_G^{-1} (y,\alpha) = y + \sum_{n=1}^p (-1)^n \frac{{np \choose n}}{n(p-1) + 1} \chi^{n(p-2)} y^{n(p-1) + 1} + O(y^{p^2}).$
            \item $\displaystyle f^G _{i(p-1)} (\alpha) = (-1)^i {ip \choose i}{\chi^{i(p-2)}}$ for $1 \leq i \leq p$.
            \item $\displaystyle h^G _{i(p-1)} (\alpha) = (-1)^i \frac{{ip \choose i}}{p}{\chi^{i(p-2)}}$ for $1 \leq i \leq p$.
        \end{enumerate}
    \end{prop}
    \begin{proof}
        Part (1) follows from the formula for the logarithm of the universal $p$-typical formal group law \cite[Appendix A2]{GreenBook}. Recall that we are using the Hazewinkel $v_i$s. Parts (2) and (3) follow in a straightforward way from part (1).
        To establish part (4), we note that, since the formal group law $G$ is $p$-typical, $[\omega^i]_G (x) = \omega^i x$. Therefore
	\begin{align*}
		\chi_G = \prod_{i=1} ^{p-1} \omega^i \alpha = -\alpha^{p-1},
	\end{align*}
    since $p$ is odd. Moreover, we have
	\begin{align*}
		g_G(x,\alpha) = x \prod_{i=1} ^{p-1} (x +_G (\omega^i \alpha)).
	\end{align*}
%
	We then compute
	\begin{align*}
g_G (x, \alpha) &= x\prod_{i=1}^{p-1} (x +_G (\omega^i \alpha)) \\
	&= x \prod_{i=1}^{p-1} (x+ \omega^i \alpha) \left[ 1 + \frac{v_3}{p} \sum_{j=1}^{p-1} \frac{x^{p^3} + (\omega^j \alpha)^{p^3} - (x+\omega^j \alpha)^{p^3}}{x+\omega^j \alpha} \right] \\
		&\equiv x\prod_{i=1} ^{p-1} (x+\omega^i \alpha) + O(x^{p^2}) \mod [p]_G (\alpha) \\
	&= x(x^{p-1} - \alpha^{p-1}) + O(x^{p^2}) \\
	&= \chi x + x^p + O(x^{p^2}),
	\end{align*}
        where we have used the fact that $pv_3 \alpha = 0$ modulo $[p]_G (\alpha)$. This establishes part (5). Part (6) follows immediately from the defining equation $\chi^{2} k_G(y,\alpha) = g_G(\chi y, \alpha)$.

        To deduce part (7), we apply Lagrange inversion to part (6). Since $$(\ell_{G})' (x) = 1 + O(x^{p^3-1}),$$ we deduce that \[(\ell_G)' (\chi k^{-1} (y, \alpha) (k^{-1})' (y,\alpha) = (k^{-1})' (y,\alpha) + O(y^{p^3-1}),\] so we may read off (8) from (7).

        Finally, (9) follows from (8) and the fact that $\bCPn^p = 0$ in $\ZZ[v_3]/v_3 ^2$.
    \end{proof}

%
    \begin{cor} \label{cor:computefinal}
        Given $1 \leq i \leq p$, there is an equality
        \[q \circ r_* \left( \chi^{i(p-1)} P_{C_p} \left(\left[\CP^{i(p-1)}\right]\right) \right) \equiv - \frac{{ip \choose i}}{p} v_3 \alpha^{p^3-1-i(p-1)} \mod [p]_G (\alpha).\]
    \end{cor}
    \begin{proof}
        Using \Cref{prop:method-R} and \Cref{prop:bigcalc1}, we compute:
	\begin{align*}
    q \circ r_* \left( \chi^{i(p-1)} P_{C_p}\left(\left[\CP^{i(p-1)}\right]\right) \right) &\equiv \chi_G ^{-i(p-1)} \cdot (f^G _{i(p-1)} (\alpha) - h^G _{i(p-1)} (\alpha) \cdot \<p\>_G (\alpha)) \\
		&\equiv \chi_G ^{-i(p-1)} \cdot (-h^G _{i(p-1)} (\alpha)) \cdot (- (p^{p^3-1}-1) v_3 \alpha^{p^3-1}) \\
		&\equiv -h^G _{i(p-1)}(\alpha) v_3 \alpha^{p^3-1-i(p-1)^2} \\
        &\equiv (-1)^{i+1} \chi_G ^{i(p-2)} \frac{{ip \choose i}}{p} v_3 \alpha^{p^3-1-i(p-1)^2} \\
        &\equiv - \frac{{ip \choose i}}{p} v_3 \alpha^{p^3-1-i(p-1)} \mod [p]_G (\alpha),
	\end{align*}
	where we have used the fact that $pv_3 \alpha = 0$ modulo $[p]_G (\alpha)$.
    \end{proof}

    \begin{proof}[Proof of \Cref{PowerProp}]
        Applying the congruence
        $\frac{{p^2 \choose p}}{p} \equiv 1 \mod p$ to \Cref{cor:computefinal}, we deduce that
	\begin{align*}
    q \circ r_* \left( \chi^{p(p-1)} P_{C_p} \left(\left[\CP^{p(p-1)}\right]\right) \right) \equiv - v_3 \alpha^{p^3-1-p(p-1)} \mod [p]_G (\alpha),
	\end{align*}
    as desired.
    \end{proof}

\begin{rmk}
	Lawson has informed the author that Zeshen Gu has independently worked on computations similar to those in this section.
\end{rmk}


\section{A Dyer-Lashof operation in the $\mathrm{MU}$-dual Steenrod algebra}\label{DLMUStSec}

In this section, we apply Theorem \hyperref[POpThm]{\ref*{POpThm}} to compute certain Dyer-Lashof operations in the $\MU$-dual Steenrod algebra $\pi_* (\mathrm{H} \wedge_{\mathrm{MU}} \mathrm{H})$. We begin by determining the structure of $\pi_* (\mathrm{H} \wedge_{\mathrm{MU}} \mathrm{H})$ as an algebra.

\begin{prop}
    The algebra $\pi_* (\mathrm{H} \wedge_{\mathrm{MU}} \mathrm{H})$ is isomorphic to an exterior algebra $\Lambda_{\FF_p} (\tau_i) \otimes \Lambda_{\FF_p} (\sigma m_i \, \vert \, i \neq p^k-1)$ on classes $\tau_i$ for $i \geq 0$ and $\sigma m_i$ for $i \geq 1$. The degrees of these classes are $\abs{\tau_i} = 2p^i-1$ and $\abs{\sigma m_i} = 2i+1$.

    The natural map $\mathrm{H} \wedge \mathrm{H} \to \mathrm{H} \wedge_{\mathrm{MU}} \mathrm{H}$, upon taking homotopy, induces a map $$\Lambda_{\FF_p} (\tau_i) \otimes \FF_p [\xi_i] \to \Lambda_{\FF_p} (\tau_i) \otimes \Lambda_{\FF_p} (\sigma m_i \, \vert \, i \neq p^k-1)$$ sending $\tau_i$ to $\tau_i$ and $\xi_i$ to $0$.
\end{prop}

\begin{proof}
  It is standard to compute
  \[\Tor_{*,*} ^{\mathrm{H}_* \mathrm{MU}} (\mathrm{H}_*, \mathrm{H}_*\mathrm{H}) \cong \Lambda_{\FF_p} (\tau_i) \otimes \Lambda_{\FF_p} (\sigma m_i \, \vert \, i \neq p^k-1),\]
  so that the K\"unneth spectral sequence
  $$\Tor_{*,*} ^{\mathrm{H}_* \mathrm{MU}} (\mathrm{H}_*, \mathrm{H}_*\mathrm{H}) \Rightarrow \pi_* \left(\left(\mathrm{H} \wedge \H\right) \wedge_{\H \wedge \mathrm{MU}} \left(\H \wedge \mathrm{H}\right)\right) = \pi_* (\H \wedge_{\MU} \H)$$
  degenerates on the $\mathrm{E}_2$-page for degree reasons.
  There are no possible extension problems, and the computation of the map
  \[\pi_* (\H \wedge \H) \to \Tor_{*,*} ^{\mathrm{H}_* \mathrm{MU}} (\mathrm{H}_*, \mathrm{H}_*\mathrm{H}) \to \pi_* (\H \wedge_{\MU} \H)\]
  follows from the degeneration.
\end{proof}

\begin{rmk}
    Note that there is a second K\"unneth spectral sequence
    $$\Tor_{*,*} ^{\pi_* \mathrm{MU}} (\mathrm{H}_*, \mathrm{H}_*) \Rightarrow \pi_* (\mathrm{H} \wedge_{\mathrm{MU}} \mathrm{H}),$$
    which gives an alternative description of $\pi_* (\mathrm{H} \wedge_{\mathrm{MU}} \mathrm{H})$ as $\Lambda_{\FF_p} (\tau_0, \sigma x_i)$. Furthermore, Lawson \cite[Section 3.3]{BPtwo} shows that for $x \in \pi_n R$ where $n \geq 1$, there is a map $\widetilde{\mathrm{H}}_{*} (SL_1 (R)) \to \pi_{* + 1} (\mathrm{H} \wedge_{R} \mathrm{H})$ which sends the Hurewicz image of $x \in \pi_n R \cong \pi_n SL_1 (R)$ to a distinguished choice of $\sigma x$.

	Furthermore, this map $\sigma : \pi_n R \to \pi_{n+1} (\mathrm{H} \wedge_R \mathrm{H})$ annihilates decomposables. Whenever we write $\sigma x$ for $x \in \pi_n R$, we will be referring to this distinguished choice of $\sigma x$.
	
\end{rmk}

We are now able to state the main theorem of this section.

\begin{thm}\label{DLMUSt}
	In $\pi_* (\mathrm{H} \wedge_{\mathrm{MU}} \mathrm{H})$, we have
    $$ Q^{p^2+1} \left(\sigma \left[\CP^{p(p-1)}\right]\right) = - \sigma v_3.$$
\end{thm}

This follows immediately from Theorem \hyperref[POpThm]{\ref*{POpThm}} and the following theorem:

\begin{thm} \label{thm:convert}
    Let $y \in \pi_{2n} \mathrm{MU}$ and suppose that $$\chi^nP(y) = \sum_{i=0} ^\infty c_i \alpha^i$$ for some elements $c_i \in \pi_{2(n + i)} \mathrm{MU}$. Then the action of the Dyer-Lashof operations on $\pi_* (\mathrm{H} \wedge_{\mathrm{MU}} \mathrm{H})$ are determined by the equation
	\begin{align*}
    Q^k (\sigma y) = (-1)^k \sigma c_{k(p-1)}.
	\end{align*}
\end{thm}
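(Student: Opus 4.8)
\emph{Proof strategy.} The plan is to adapt the argument of Section~4 of~\cite{BPtwo} from the prime $2$ to odd primes. The heart of the matter is a comparison between two manifestations of the operadic structure in play: the Dyer--Lashof operations on $\pi_*(H \wedge_{MU} H)$, coming from its $E_\infty$-$H$-algebra structure, and the $H_\infty$ power operation $P$ on $MU^*$, coming from the $E_\infty$-ring structure of $MU$ and its canonical complex orientation. The map $\sigma$ of the preceding remark is the bridge, and the asserted formula is in effect an $MU$-theoretic Nishida relation for $\sigma$. Since $\sigma$ annihilates decomposables and every algebra generator of $\pi_*(H \wedge_{MU} H)$ --- the $\tau_i$ and the $\sigma m_i$ --- is of the form $\sigma x_j$, such a formula, together with the Cartan formula, determines the full Dyer--Lashof action, so it suffices to prove it.

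First I would recall Lawson's construction of $\sigma$ and put it in a form amenable to power operations. Lawson produces a map $\lambda \colon \widetilde H_*(SL_1(MU);\FF_p) \to \pi_{*+1}(H \wedge_{MU} H)$ sending the Hurewicz image of $y \in \pi_{2n}(MU) \cong \pi_{2n}(SL_1(MU))$ to $\sigma y$. Because $MU$ is $E_\infty$, the space $SL_1(MU)$ is an infinite loop space, so $\widetilde H_*(SL_1(MU);\FF_p)$ carries Dyer--Lashof operations (for the multiplicative $E_\infty$-structure), as does $\pi_*(H \wedge_{MU} H)$. The first step is to show that $\lambda$ intertwines these operations up to the degree shift, $\lambda(Q^k z) = Q^k(\lambda z)$. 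This compatibility is of Kudo transgression type: $\sigma$ arises, up to a degree shift, from the bar (K\"unneth) filtration on $H \wedge_{MU} H \simeq |B(H, MU, H)|$, whose simplicial terms $[n] \mapsto H \wedge MU^{\wedge n} \wedge H$ are $E_\infty$-rings, and power operations commute with the resulting suspension maps (with the usual Bockstein correction at odd primes). Granting this, computing $Q^k(\sigma y)$ reduces to computing $Q^k$ of the Hurewicz image of $y$ in $\widetilde H_*(SL_1(MU);\FF_p)$.

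For the second step I would compute that Dyer--Lashof operation through the extended power. The operation $Q^k$ on the Hurewicz image of $y \colon S^{2n} \to SL_1(MU)$ is read off from the composite $D_p(S^{2n}) \to D_p(SL_1(MU)) \to SL_1(MU)$ of the $\Sigma_p$-extended power with the structure map. The cyclic $p$-fold smash $(S^{2n})^{\wedge p}$ is the representation sphere of $\mathbb{C}^n \otimes \rho$, $\rho$ the complex permutation representation, so after restriction along $BC_p \to B\Sigma_p$ the relevant part of $D_p(S^{2n})$ is $\Sigma^{2n}$ of the Thom space of $\mathbb{C}^n \otimes \bar\rho$ over $BC_p$, with $\bar\rho$ the reduced complex regular representation of $C_p$. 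Passing to $MU$-cohomology and using the Thom isomorphism $MU^*((BC_p)^{\mathbb{C}^n \otimes \bar\rho}) \cong MU^{*-2n(p-1)}(BC_p)$, the Thom class goes to the Euler class $e(\mathbb{C}^n \otimes \bar\rho) = \bigl(\prod_{i=1}^{p-1}[i]_F(\alpha)\bigr)^n = \chi^n$ --- this is the origin of the normalizing factor $\chi^n$ --- while the map $D_p(SL_1(MU)) \to SL_1(MU)$ is precisely what $P$ computes on units, so the composite records $\chi^n P(y) \in MU^*[[\alpha]]/[p]_F(\alpha)$. Finally, $H_*(BC_p;\FF_p)$ is one-dimensional in each nonnegative degree, and under the standard correspondence between the Dyer--Lashof indexing (at odd $p$, $Q^k$ raises degree by $2k(p-1)$) and powers of the Euler class $\alpha$ (of cohomological degree $2$), $Q^k$ picks out the coefficient of $\alpha^{k(p-1)}$. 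Combining, $Q^k(\sigma y) = \sigma(c_{k(p-1)})$ where $\chi^n P(y) = \sum_i c_i \alpha^i$; degrees match since $c_{k(p-1)} \in \pi_{2(n + k(p-1))}(MU)$, so $\sigma c_{k(p-1)}$ and $Q^k(\sigma y)$ both lie in degree $2n + 2k(p-1) + 1$.

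The main obstacle is carrying out the second step with full precision: pinning down exactly the coefficient $c_{k(p-1)}$ --- in particular showing $Q^k(\sigma y)$ has no decomposable part, so that the identity is an honest equality rather than a congruence modulo decomposables --- and nailing down the transgression/Kudo compatibility of $\lambda$. This is where Lawson's $p=2$ argument must genuinely be extended: at odd primes the Dyer--Lashof operations are $(p-1)$-periodically indexed and carry Bockstein and sign decorations, the reduced regular representation of $C_p$ is $(p-1)$-dimensional rather than $1$-dimensional so that the twist $\chi = \prod_{i=1}^{p-1}[i]_F(\alpha)$ is a genuine product, and the passage from $\Sigma_p$ to $C_p$ together with the attendant binomial-coefficient combinatorics must be handled with care.
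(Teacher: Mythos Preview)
Your strategy is the paper's strategy: both reduce the theorem to the analysis of Section~4 of \cite{BPtwo}, and both identify the main work as porting that analysis to odd primes. Your sketch of how $\chi^n$ appears (as the Euler class of $\mathbb{C}^n\otimes\bar\rho$ under the Thom isomorphism for $D_p(S^{2n})$) and how the coefficient of $\alpha^{k(p-1)}$ is selected by $Q^k$ is correct in outline.

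Where the paper is sharper is in naming exactly which two steps of \cite{BPtwo} require new odd-primary arguments, and then supplying them. The first is the explicit formula for the composite $H_*(X)\otimes H_*(B\Sigma_p)\to H_*(D_p(X))$ in terms of $Q^{j+n}(P_j x)$ and its Bockstein variant, deduced from May's Proposition~9.1; this is what makes precise your claim that ``$Q^k$ picks out the coefficient of $\alpha^{k(p-1)}$.'' The second --- and this is the point you flag as an obstacle without indicating a method --- is a Hopf-ring computation: one must compare the \emph{multiplicative} Dyer--Lashof operations $\widehat{Q^s}$ on classes in the $1$-component of $\Omega^\infty MU$ (which is where $SL_1$ lives) with their values after shifting to the $0$-component. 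The paper proves $\widehat{Q^s}([1]\#y)\equiv[1]\#\widehat{Q^s}(y)$ modulo $\#$- and $\circ$-decomposables via the mixed Cartan formula of \cite{CLM}, together with a small lemma that the additive $Q^s(x)$ are always $\circ$-decomposable. Your ``Kudo transgression'' heuristic for the compatibility of $\lambda$ is not wrong in spirit, but it does not by itself handle this additive-versus-multiplicative shift, which is where the genuine odd-primary bookkeeping (the $(p+1)$-fold diagonal, the constants $m_i=\frac{1}{p}\binom{p}{i}$, and the vanishing of all but one term) takes place.
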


Our proof of this theorem will follow \cite[Sections 3 and 4]{BPtwo} rather closely.
The idea will be to relate the power operation $P_{C_p}$ to the action of the multiplicative Dyer-Lashof operations on the homology of the $\Omega$-spectrum of $\MU$, and to relate this in turn to the Dyer-Lashof operations on $\pi_* (\H \wedge_\MU \H)$.

First, we need to introduce some notation from \cite[Section 4]{BPtwo}.

\begin{nota}
    We let \[\MU_{n} = \Omega^{\infty} \Sigma^{n} \MU\] denote the spaces in the $\Omega$-spectrum for $\MU$.

    Since $\MU$ is a ring spectrum, the homology of the spaces $\MU_n$ is equipped with two products, making $\H_* (\MU_\bullet)$ into a Hopf ring.
    We denote the additive one, coming from the infinite loop space structure on $\MU_n$, by
    \[- \# - : \H_* (\MU_n) \otimes \H_* (\MU_n) \to \H_* (\MU_n),\]
    and the multiplicative one, coming from the multiplication on $\MU$, by
    \[- \circ - : \H_* (\MU_n) \otimes \H_* (\MU_m) \to \H_* (\MU_{n+m}).\]

    Since $\MU$ is an $\mathbb{E}_\infty$-ring spectrum, $\MU_0$ is equipped with the structure of an $\mathbb{E}_\infty$-ring space. Its homology is therefore equipped with two actions of he Dyer-Lashof operations, an additive action coming the infinite loop space structure on $\MU_0$, and a multiplicative one coming from the $\mathbb{E}_\infty$-multiplication on $\MU$.

    We denote the additive operations by
    \[Q^k : \H_n (\MU_0) \to \H_{n+2(p-1)k} (\MU_0) \]
    and the multiplicative operations by
    \[\Qh^k : \H_n (\MU_0) \to \H_{n+2(p-1)k} (\MU_0). \]
\end{nota}

\begin{defin}
    The $\mathbb{H}_\infty ^2$-algebra structure on $\MU$ implies the existence of based maps
    \[\MU_{2n} \wedge (B\Sigma_p)_+ \to \MU_{2pn}\]
    representing the power operation
    \[P : \MU^{2n} (X) \to \MU^{2pn} (X \times B\Sigma_p).\]
    We let
    \[\Q : \H_* (\MU_{2n}) \to \H_* (\MU_{2pn}) \widehat{\otimes} \H^* (B\Sigma_p)\]
    denote the adjoint to the map
    \[H_* (\MU_{2n}) \otimes \H_* (B\Sigma_p) \to \H_* (\MU_{2pn}) \]
    induced by the above map of spaces.
\end{defin}

Multiplicativity of $P$ implies the following:

\begin{prop}[{\hspace{1sp}\cite[Proposition 4.2.2]{BPtwo}}]
    The operation $\Q$ preserves the $\circ$-product: $\Q(x) \circ \Q(y) = \Q(x \circ y)$.
\end{prop}

\begin{nota}
    Let $b_i \in \H_{2i} (\MU_2)$ denote the image under the orientation map $\CP^{\infty} \to \MU_2$ of the class in $\H_{2i} (\CP^\infty)$ dual to $c_1 ^i$. We let $b(s) = \sum_{i=1} ^\infty b_i s^i$, viewed as a formal power series in $s$.
\end{nota}

\begin{rmk} \label{rmk:b1-susp}
    Since $b_1$ is the fundamental class of the unit map $S^2 \to \MU_2$, $-\circ b_1 : \H_*(\MU_{2n}) \to \H_*(\MU_{2n+2})$ corresponds to suspension.
\end{rmk}

\begin{nota}
    Given a homotopy element $x \in \pi_{2n} (\MU)$, we let $[x] \in \H_0 (\MU_{2n})$ denote the image of the corresponding class in $\pi_{0} (\MU_{2n})$ under the Hurewicz map.

    It follows from \Cref{rmk:b1-susp} that $[x] \circ b_1 ^{\circ n} \in \H_{2n} (\MU_0)$ is the image of $x$, viewed as an element of $\pi_{2n} (\MU_0)$, under the Hurewicz map.
\end{nota}

\begin{defin}
    Given a based space $X$, there is a natural map
    \[\Lambda : \MU^{2n} (X) = [X, \MU_{2n}] \to \Hom(\H_* (X), \H_* (\MU_{2n})) = \H_* (\MU_{2n}) \widehat{\otimes} \H^* (X)\]
    which sends a homotopy class of map to its induced map on homology.
\end{defin}

The groups $\H_* (\MU_{2n}) \widehat{\otimes} \H^* (X)$ are equipped with products $\#$ and $\circ$, each induced by the corresponding product in $\H_* (\MU_{2n})$ and the cup product in $\H^* (X)$.

\begin{prop}[{\hspace{1sp}\cite[Propositions 3.2.3 and 4.2.3]{BPtwo}}]
    The map $\Lambda$ satisfies the following properties:
    \begin{itemize}
        \item $\Lambda(x+y) = \Lambda (x) \# \Lambda(y)$
        \item $\Lambda(xy) = \Lambda (x) \circ \Lambda (y)$
        \item $\Lambda([c]) = [c] \otimes 1$
        \item $(\Q \otimes 1)(\Lambda(x)) = \Lambda(P(x))$.
    \end{itemize}
\end{prop}

\begin{nota}
    Recall that
    \[\H^* (BC_p) \cong \FF_p [w] \otimes \Lambda_{\FF_p} (v),\]
    where $\abs{v} = 1$, $\abs{w} = 2$, and $w$ is the image of the generator $c_1$ of $H^2 (\CP^\infty)$ under
    the map on cohomology induced by the map
    \[BC_p \to \CP^\infty\]
    corresponding to the standard inclusion $C_p \hookrightarrow S^1$.
    Furthermore,
    \[\H^* (B\Sigma_p) \cong \FF_p [u] \otimes \Lambda_{\FF_p} (z),\]
    where, when pulled back to $BC_p$, $u = w^{p-1}$ and $z = v w^{p-2}$.

    Furthermore, we let $\beta_n$ be dual to $u^n$ in $\mathrm{H}^* (B \Sigma_p) \cong \FF_p [u] \otimes \Lambda_{\FF_p} [z]$ and $\gamma_n$ be dual to $u^{n-1} z$.
\end{nota}

\begin{rmk}[{\hspace{1sp}\cite[Remark 4.2.4]{BPtwo}}]
    Recall that $\MU^* (BC_p) \cong \MU^* [[\alpha]]/[p]_F (\alpha)$ with $\alpha \in \MU^2 (BC_p)$. The class $\alpha$ satisfies the equation
    \[\Lambda(\alpha) = b(w).\]
\end{rmk}

\begin{nota}
  Let $P_j$ be the homology operation dual to the Steenrod power $P^j$.
\end{nota}

\begin{lemma}
	For a space $X$ with $p$th extended power $D_p (X)$, the composite diagonal map on mod $p$ homology
  $$\mathrm{H}_* (X) \otimes \mathrm{H}_* (B \Sigma_p) \xrightarrow{\sim} \mathrm{H}_* (X \times B\Sigma_p) \to \mathrm{H}_* (D_p (X))$$
	is given by
  $$ x \otimes \beta_n \mapsto (-1)^{n} \sum_{j \geq 0} Q^{j+n} (P_j x)$$
	and
  $$ x \otimes \gamma_n \mapsto (-1)^{n+\abs{x}} \left( \sum_{j \geq 0} \beta Q^{j+n} (P_j x) - \sum_{j \geq 0} Q^{j+n} (P_j \beta x) \right)$$
\end{lemma}

\begin{proof}
  This follows from the definition of the Dyer-Lashof operations (cf. \cite[Definition 2.2]{MaySt}) and \cite[Proposition 9.1]{MaySt}.
    Note that an extra sign is introduced in the latter equation due to the fact that we have written the $B\Sigma_p$-action on the right and not the left.
    See also \cite[Proposition 6]{HomOpRev}.
\end{proof}

The following corollary then follows from the definitions:

\begin{cor} \label{thm:Q-Q}
    Suppose that $x \in \H_{*} (\MU_0)$. Then:
    \[\Q(x) = \sum_{n,j} (-1)^n \Qh^{j+n} (P_j x) u^n + (-1)^{n+\abs{x}}\left(\beta \Qh^{j+n} (P_j x) - \Qh^{j+n} (P_i \beta x) \right)u^{n-1}z.\]
    In particular, if $x$ is in the image of the Hurewicz map, then
    \[\Q(x) = \sum_{n=0} ^\infty (-1)^n \Qh^{n} (x) u^n + (-1)^{n+\abs{x}} \beta \Qh^{n} (x) u^{n-1} z.\]
\end{cor}


\begin{prop}
    We have \[\Q (b_1) = b_1 \circ \Lambda(\chi).\]
\end{prop}

\begin{proof}
    This is the second to last equation in the proof of \cite[Proposition 4.3.1]{BPtwo}.
\end{proof}

\begin{prop} \label{prop:formula}
    Let $y \in \pi_{2n} \mathrm{MU}$ and suppose that $$\chi^nP(y) = \sum_{i=0} ^\infty c_i \alpha^i$$ for some elements $c_i \in \pi_{2(n + i)} \mathrm{MU}$.

    Then, modulo $\#$-decomposables and the $\circ$-ideal generated by $b_2, b_3, \dots$, we have
    \[\Qh^{k}([y] \circ b_1^{\circ n}) \equiv (-1)^k [c_{(p-1)k}] \circ b_1 ^{\circ (p-1)k}.\]
\end{prop}

\begin{proof}
    We have:
    \begin{align*}
        \Q([y] \circ b_1^{\circ n}) &= \Q([y]) \circ \Q(b_1)^{\circ n} \\
        &= \Lambda (P(y)) \circ \Lambda (\chi)^{\circ n} \\
        &= \Lambda (P(y) \chi ^n) \\
        &= \Lambda (\sum_{i=0} ^\infty c_i \alpha^i) \\
        &= \hash_{i=0} ^\infty [c_i] \circ b(w) ^{\circ i} \\
        &\equiv \sum_{i=0} ^{\infty} [c_i] \circ (b_1)^{i} \circ w^{i},
    \end{align*}
    where we view $\Q ([y] \circ b_1 ^{\circ n})$ as living inside of $\H_* (\MU_0) \widehat{\otimes} \H^* (BC_p)$ via the natural inclusion $\H^* (B\Sigma_p) \hookrightarrow \H^* (BC_p)$.

    The result now follows from \Cref{thm:Q-Q} and the fact that the operations $P_j$ and $P_j \beta$ vanish on the spherical class $[y] \circ b_1 ^{\circ n}$.
\end{proof}

\begin{prop}\label{shift}
Let $p$ be an odd prime. Then the multiplicative Dyer-Lashof operations in the Hopf ring of an $\mathbb{E}_\infty$-ring space satisfy the following identity whenever $y$ is in the homology of the path component of zero:
	$$\widehat{Q^s} ([1] \# y) \equiv [1] \# \widehat{Q^{s}} (y)$$
	modulo $\#$ and $\circ$ decomposables.
\end{prop}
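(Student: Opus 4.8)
The plan is to reduce the statement to the mixed Cartan formula for the multiplicative Dyer--Lashof operations --- the distributivity-derived expansion of $\widehat{Q^s}$ of a $\#$-product in terms of the $\#$- and $\circ$-structure --- and then to observe that, modulo $\#$- and $\circ$-decomposables, only a single term of that expansion survives. We may assume $y$ has positive degree, the degree-$0$ case being trivial. First I would record the values $\widehat{Q^s}([1])$: since the unit $\SS \to R$ is a map of $E_\infty$-rings it induces a map of Hopf rings commuting with all the $\widehat{Q^s}$, so it suffices to evaluate in $H_* (QS^0)$, where $[1]$ is the image of a point under the unit. The $\Sigma_p$-equivariant $p$-fold $\circ$-power applied to $[1]$ then factors through $B\Sigma_p$ mapping constantly to the component of $1$, whence $\widehat{Q^0}([1]) = [1]^{\circ p} = [1]$, while $\widehat{Q^s}([1]) = 0$ for $s \neq 0$ and $\beta\widehat{Q^s}([1]) = 0$ for all $s$.

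Next I would expand $\widehat{Q^s}([1] \# y)$. Using that $\circ$ distributes over $\#$ in a Hopf ring and that $\psi([1] \# y) = \sum [1] \# y' \otimes [1] \# y''$, iterating the distributivity law through the $\Sigma_p$-equivariant $p$-fold $\circ$-power of $[1] \# y$ produces an expansion whose leading summand is $\sum_{i+j=s} \widehat{Q^i}([1]) \# \widehat{Q^j}(y)$ and each of whose other summands either is a $\#$-product of two positive-degree classes, or contains a $\circ$-product of two positive-degree classes coming from the reduced coproduct of $y$, or contains a factor $\widehat{Q^i}([1])$ with $i > 0$, or contains the $\circ$-absorbing class $[0]$ multiplied into a positive-degree class. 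This is the odd-primary form of the computation underlying the mixed Cartan formula used in Section 4 of \cite{BPtwo}; the only genuinely new feature is the bookkeeping of the Bockstein-twisted summands, which are controlled in the same way.

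By the first paragraph the leading summand equals $[1] \# \widehat{Q^s}(y)$, and each remaining summand is either zero or $\#$- or $\circ$-decomposable, which gives the asserted congruence. The $\beta\widehat{Q^s}$ version then follows either by rerunning the same expansion with the twisted operations, or by applying the derivation $\beta$ to the congruence and using $\beta([1] \# y) = [1] \# \beta(y)$ together with the fact that $\beta$ preserves decomposables. The main obstacle is the second-paragraph step: carrying out the $\Sigma_p$-equivariant distributivity expansion carefully enough at an odd prime to confirm that \emph{every} non-leading term --- including the Bockstein-twisted contributions, which have no analogue at $p = 2$ --- really does lie in the span of $\#$- and $\circ$-decomposables. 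This is the place where the argument of \cite{BPtwo} must be redone rather than merely transcribed.
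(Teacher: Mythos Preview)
Your overall strategy---expand via the mixed Cartan formula, compute $\widehat{Q^s}([1])$, and argue that the cross terms are decomposable---matches the paper's. However, your list of possible forms for the non-leading summands is incomplete, and the missing case is exactly the one that requires a separate argument.

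The mixed Cartan formula for $\widehat{Q^s}([1] \# y)$ is a $\#$-product of $p+1$ factors indexed by $i=0,\dots,p$, where for $0 < i < p$ the factor $\widehat{Q^{s_i}_i}(x \otimes y)$ involves the \emph{additive} Dyer--Lashof operation applied to a $\circ$-product of pieces of the iterated coproducts of $x$ and $y$. After reducing modulo $\#$-decomposables, so that all but one factor lies in degree~$0$, the ``special'' index $j$ can be any of $0,\dots,p$. You handle $j=0$ and $j=p$ correctly. But for $0<j<p$ the surviving factor is
\[
\widehat{Q^{s_j}_j}([1]\otimes y) \;=\; [m_j]\circ Q^{s_j}\bigl([1]^{\circ j}\circ y_1 \circ \cdots \circ y_{p-j}\bigr),
\]
with the $y_k$ coming from $\Delta_{p-j}(y)$. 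Take $j=p-1$: then $\Delta_1(y)=y$, and the inner expression is simply $Q^{s_j}(y)$. No reduced coproduct of $y$ appears, no $[0]$ is absorbed, and no multiplicative $\widehat{Q^i}([1])$ with $i>0$ is present---so this term falls into none of your four categories.

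What is actually needed here is the separate fact that the additive operation $Q^s(z)$ is always $\circ$-decomposable for $s>0$. The paper isolates this as a lemma, proved by inverting the identity $Q^s[1]\circ z = \sum_k Q^{s+k}([1]\circ P_k z)$ (from II.1.6 of \cite{CLM}) by induction on the degree of $z$. Without this lemma the argument does not close. You correctly flag the middle step as the ``main obstacle'', but your proposed taxonomy of non-leading terms shows you have not yet located the missing ingredient; it is not a bookkeeping issue with Bocksteins but a genuinely new input about additive operations landing in $\circ$-decomposables.
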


We first prove a lemma.

\begin{lemma}\label{declemma}
	In the situation of \Cref{shift}, for any $x$ there exist elements $z_i$ for $0 < i < \abs{x}$ such that the additive Dyer-Lashof operations satisfy $$Q^s (x) = Q^s [1] \circ x + \sum Q^{s_i} [1] \circ z_i.$$ Therefore $Q^s (x)$ is $\circ$-decomposable for any $x$ and any $s > 0$.
\end{lemma}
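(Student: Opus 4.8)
The plan is to exploit the well-known structure of the Dyer-Lashof algebra acting on the homology of a $p$-th extended power, combined with the Hopf ring structure. Recall that for an $E_\infty$-ring $R$, the homology $H_* R$ carries two products, $\#$ and $\circ$; the additive Dyer-Lashof operations $Q^s$ are, by construction, the operations coming from the $E_\infty$-structure on the underlying additive infinite loop space, i.e. from the extended power construction on $GL_1$ or on the zero component. The key structural fact is the Cartan formula for how the operations $Q^s$ interact with the $\circ$-product via the two-variable external operations on $H_*(D_p X) $: namely, the $\circ$-product is induced by a map of $E_\infty$-spaces, so one has a Cartan-type formula of the shape $Q^s(a \circ b) = \sum Q^{s'}(a) \circ Q^{s''}(b)$ up to the usual lower terms. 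I would then use the fact that $[1]$ is the $\circ$-unit's neighbor (the class of $1 \in \pi_0$), so that $x = [1]\circ x$ — wait, that is not quite right; rather, I would use that every element $x$ in the homology of the zero component can be written, after translating by $[-1]$, in terms of the $\circ$-action, and that the diagonal/coproduct formula of the preceding lemma expresses $Q^s$ applied to such an element in terms of the external operations $Q^{j+n}(P_j x)$.

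The concrete strategy: write down the action of $Q^s$ on $x$ using the extended-power diagonal formula from the lemma immediately preceding \Cref{shift}, which expresses the image of $x \otimes \beta_n$ (resp. $x\otimes\gamma_n$) in $H_*(D_p X)$ as $\sum_j Q^{j+n}(P_j x)$ (resp. the $\beta$-twisted version). Applying this with $X$ the relevant space and tracking the $\circ$-product structure, the leading term (the $j=0$, $P_0 x = x^{\otimes p}$ contribution, or rather the term where the "internal" power operations $P_j$ are trivial) produces exactly $Q^s[1]\circ x$, where $Q^s[1]$ is the operation applied to the fundamental class; the remaining terms, involving $P_j x$ for $j>0$, land in strictly lower "weight" and can be reorganized as $\sum Q^{s_i}[1]\circ z_i$ with $z_i$ of degree $0 < |z_i| < |x|$ (these $z_i$ are built from the lower power operations $P_j x$). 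The second sentence of the lemma is then immediate: each summand on the right-hand side is visibly a $\circ$-product of two positive-degree classes (here one uses $s>0$ so that $Q^s[1]$ is not the unit and $x$, $z_i$ are in the augmentation ideal of the $\#$-Hopf algebra), hence $\circ$-decomposable, and a sum of $\circ$-decomposables is $\circ$-decomposable.

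The main obstacle I anticipate is bookkeeping the precise form of the "error terms" and verifying that the classes $z_i$ genuinely have degree in the open range $(0, |x|)$ rather than possibly hitting the endpoints — in particular ruling out a $z_i$ of degree $0$ (which would be a scalar multiple of $[1]$ and hence not give anything new) or of degree $|x|$ (which would collapse the induction one might want to run downstream). This requires knowing that the power operations $P_j$ for $j > 0$ strictly decrease the relevant filtration and that $P_0$ contributes only the term already isolated as $Q^s[1]\circ x$. A secondary subtlety is the odd-primary Bockstein: the $\gamma_n$-part of the diagonal formula introduces $\beta Q^{j+n}$ terms, and one must check these also organize into the stated form (they contribute to the $z_i$ with an extra $\beta$, or are absorbed using the relation $\beta(a\circ b) = \beta a \circ b \pm a \circ \beta b$). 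I expect both points to be routine given Proposition 9.1 of \cite{MaySt} and the standard Hopf-ring Cartan formulae, but they are where the real content of the verification lies; this lemma is precisely the technical input needed to then deduce \Cref{shift} by pushing the $\#$-product $[1]\#(-)$ through the $\circ$-decomposition.
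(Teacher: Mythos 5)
Your approach is essentially the paper's: the identity you propose to extract from the extended-power diagonal is exactly the formula $Q^s [1] \circ x = \sum_i Q^{s+i}([1] \circ P_i x)$ of II.1.6 of \cite{CLM}, which the paper simply cites, and your ``reorganization'' of the error terms $Q^{s+j}(P_j x)$ for $j>0$ into the form $\sum Q^{s_i}[1]\circ z_i$ is precisely the induction on $\deg x$ that the paper runs (each $P_j x$ with $j>0$ has strictly smaller degree, and the degree-zero contributions vanish since $Q^{s}[0]=0$ for $s>0$). The $\circ$-decomposability conclusion then follows as you indicate, so the two arguments coincide.
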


\begin{proof}
    This follows from the formula $$Q^s [1] \circ x = \sum_i Q^{s+i} ([1] \circ P_i x)$$ of \cite[Proposition II.1.6]{CLM} by inducting on the degree of $x$.
\end{proof}

\begin{proof}[Proof of \Cref{shift}]
  Let $\epsilon$ denote the counit and $\Delta_i$ denote the $i$-fold coproduct.

  We apply the mixed Cartan formula \cite[Theorem II.2.5]{CLM}, which states that $$\widehat{Q^s} (x \# y) = \sum_{s_0+\dots + s_p = s} \sum \widehat{Q^{s_0} _0} (x_0 \otimes y_0) \# \dots \# \widehat{Q^{s_p} _p} (x_p \otimes y_p)$$ where $$\Delta_{p+1} (x \otimes y) = \sum (x_0 \otimes y_0) \otimes \dots \otimes (x_p \otimes y_p)$$ and where $$\widehat{Q^s _0} (x \otimes y) = \epsilon(x) \widehat{Q^s} (y),$$ $$\widehat{Q^{s} _p} (x \otimes y) = \epsilon(y) \widehat{Q^s} (x),$$ and for $0 < i < p$ we put $m_i = \frac{1}{p} {p \choose i}$ so that $$\widehat{Q^s _i} (x \otimes y) = [m_i] \circ \left( \sum Q^{j} (x_1 \circ \dots \circ x_i \circ y_1 \circ \dots \circ y_{p-i}) \right)$$ where $\Delta_i x = \sum x_1 \otimes \dots \otimes x_i$ and $\Delta_{p-i} y = \sum y_1 \otimes \dots \otimes y_{p-i}$.

	Applying this to the case that $x = [1]$ and $y$ is in the homology of the path component of zero, we first note that this is $\#$-decomposable unless all of but one of the terms lies in degree $0$, i.e. unless $y_i= [0]$ and $s_i = 0$ for all but one $i$.

  Using \Cref{declemma}, we further deduce that all of the terms with $s_i \neq 0$ for some $0 < i < p$ are $\circ$-decomposable. Finally, we note that $\widehat{Q^s _p} ([1] \otimes y) = \widehat{Q^s} ([1]) = 0$ for $s > 0$ by \cite[Lemma II.2.6]{CLM}, so that in fact the only term left is $$\widehat{Q^{s} _0} ([1] \otimes y) \# \widehat{Q^0 _1} ([1] \otimes [0]) \# \dots \# \widehat{Q^0 _p} ([1] \otimes [0]).$$
  Now, for $0 < i < p$ we have $\widehat{Q^0 _i} ([1] \otimes [0]) = [m_i] \circ Q^0 ([0]) = [m_i] \circ [0] = [0]$. Furthermore, we have $\widehat{Q^0 _p} ([1] \otimes [0]) = \epsilon([0]) \widehat{Q^0} ([1]) = [1]$ and $\widehat{Q^s _0} ([1] \otimes y) = \epsilon([1]) \widehat{Q^s} (y) = \widehat{Q^s} (y)$.
  It follows that
  \[\widehat{Q^{s} _0} ([1] \otimes y) \# \widehat{Q^0 _1} ([1] \otimes [0]) \# \dots \# \widehat{Q^0 _p} ([1] \otimes [0]) = \widehat{Q^{s}} y \# [1].\]

  All that remains is to show that the multiplicity of this term is one, i.e. that $$([1] \otimes y) \otimes ([1] \otimes [0]) \otimes \dots \otimes ([1] \otimes [0])$$ appears with coefficient one in $\Delta_{p+1} ([1] \otimes y)$.
  This follows from the fact that $\Delta_{p+1} ([1]) = [1] \otimes \dots \otimes [1]$ and that $x \otimes [0] \otimes \dots \otimes [0]$ appears in $\Delta_{p+1} (x)$ with coefficient one.
\end{proof}

We are now ready to prove \Cref{thm:convert}.

\begin{proof}[Proof of \Cref{thm:convert}]
  In \cite[Section 3.3]{BPtwo}, a suspension map $\sigma : \widetilde{\H}_* (SL_1 (\MU)) \to \pi_{*+1} (\H \wedge_{\MU} \H)$ is constructed. By the mod $p$ analogues of \cite[Corollary 3.3.6 \& Proposition 3.3.7]{BPtwo} (which are proved in exactly the same way as for $p=2$), this map commutes with the Dyer-Lashof operations and kills $\#$-decomposables, $\circ$-decomposables, and $b_i$ for $i > 1$.
  Applying $\sigma$ to \Cref{prop:formula} and \Cref{shift}, we obtain the desired result.
\end{proof}

Our next goal is to deduce Theorem \hyperref[MUThm]{\ref{MUThm}} from Theorem \hyperref[DLMUSt]{\ref*{DLMUSt}} by noting that the Dyer-Lashof operations exhibited therein are incompatible with the existence of a highly structured map $\mathrm{H} \wedge_{\mathrm{MU}} \mathrm{H} \to \mathrm{H} \wedge_{\mathrm{BP}} \mathrm{H}.$ We begin by showing that a highly structured map $\mathrm{MU} \to \mathrm{BP}$ would induce a (slightly less) highly structured map $\mathrm{H} \wedge_{\mathrm{MU}} \mathrm{H} \to \mathrm{H} \wedge_{\mathrm{BP}} \mathrm{H}$.

\begin{prop}\label{En}
	Let $R$ be an $\mathbb{E}_\infty$-ring and let $A \to B$ denote a map of $\mathbb{E}_n$-rings augmented over $R$. Then there exists a natural map $R \wedge_{A} R \to R \wedge_{B} R$ of $\mathbb{E}_{n-1}$-$(R \wedge R)$-algebras.
\end{prop}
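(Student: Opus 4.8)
The plan is to produce the map by a base-change construction in $\infty$-categorical commutative/associative algebra, working in the setting of \cite{HA} as indicated in the conventions. The key point is that relative smash products like $R \wedge_A R$ can be obtained as pushouts (i.e. relative tensor products) in an appropriate category of algebras, and pushouts are functorial.

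First I would set up the categorical framework. Since $R$ is $E_\infty$, it is in particular an $E_n$-ring, and by \cite{HA} the $\infty$-category $\mathrm{Alg}_{E_n}^{R}$ of $E_n$-$R$-algebras is presentably symmetric monoidal under $\wedge_R$; here ``$E_n$-$R$-algebra'' means an $E_n$-ring receiving a central map from $R$, which is exactly what an $E_n$-ring augmented over $R$ together with its canonical map is not — so the first real step is to turn the augmentations $A \to R$ and $B \to R$ around. Given a map of $E_n$-rings $A \to B$ augmented over $R$, base change along $A \to R$ and $B \to R$ (as $E_1$-algebra maps, using that $R$ is at least $E_1$-central) produces $E_{n-1}$-$R$-algebras: the general principle is that if $C$ is an $E_n$-ring and $C \to R$ a map of $E_k$-rings with $k \leq n$, then $R \wedge_C R$ naturally carries the structure of an $E_{\min(n-1,\,?)}$-$R$-algebra. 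Concretely, $R \wedge_C R \simeq R \otimes_{C} R$ is the relative tensor product, and one uses that the two-sided bar construction $\mathrm{Bar}(R, C, R)$ is functorial in the triple $(R, C, R)$; the loss of one level of commutativity, from $E_n$ to $E_{n-1}$, is the standard Dunn–Lurie additivity phenomenon — a relative tensor product of $E_n$-algebras over an $E_n$-base is again $E_{n-1}$ (this is essentially \cite{HA}, the additivity theorem: $E_n = E_{n-1} \otimes E_1$, and the relative tensor product along $E_1$ costs nothing but the $E_1$-direction of the $E_n = E_{n-1} \otimes E_1$ splitting is consumed).

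The main steps, in order, are: (1) replace the augmentations by their sources and form $R \wedge_A R$ and $R \wedge_B R$ as relative tensor products, noting that both are $E_{n-1}$-rings under $R$ by additivity; (2) observe that the map $A \to B$ of $E_n$-rings over $R$ induces, by functoriality of the bar construction $\mathrm{Bar}(R, -, R)$ applied levelwise, a map $R \wedge_A R \to R \wedge_B R$ — here one must check this is a map of $E_{n-1}$-algebras, not merely of spectra, which follows from the fact that the whole construction is a functor $\mathrm{Alg}_{E_n}^{/R} \to \mathrm{Alg}_{E_{n-1}}^{R/}$ of $\infty$-categories, e.g. by expressing it as a composite of a forgetful functor, a base-change (left adjoint, hence symmetric monoidal in the appropriate sense) and the identification of relative tensor products; (3) upgrade the target of structure: since $R \wedge R = R \otimes_{\mathbb{S}} R$ is an $E_{n-1}$-ring (in fact $E_\infty$) and both $R \wedge_A R$ and $R \wedge_B R$ receive compatible maps from $R \wedge R$ (via $R \wedge R \to R \wedge_A R$ and likewise for $B$, using the two inclusions of $R$), the map of (2) is naturally a map of $E_{n-1}$-$(R \wedge R)$-algebras; (4) finally, transport back to the EKMM model via Theorem 7.10 of \cite{rect} as already invoked in the conventions, so that the statement applies to the point-set categories in which the rest of the paper works.

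The step I expect to be the main obstacle is (2)–(3): verifying that the induced map genuinely respects the $E_{n-1}$-$(R\wedge R)$-algebra structure, rather than just being a map of underlying spectra or of $E_1$-rings. The cleanest way to handle this is to avoid ever building the map by hand: instead, exhibit the entire assignment $(A \to B \text{ over } R) \mapsto (R \wedge_A R \to R \wedge_B R)$ as induced by a functor between $\infty$-categories of algebras — specifically the relative-tensor-product functor $\mathrm{Alg}_{E_n}(\mathrm{Mod}_{R}) \to \mathrm{Alg}_{E_{n-1}}(\mathrm{Mod}_{R \wedge R})$, $C \mapsto R \otimes_C R$, combined with the functoriality of $\mathrm{Alg}_{E_n}^{/R} \to \mathrm{Alg}_{E_n}(\mathrm{Mod}_R)$ sending an augmented algebra to its base change. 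Naturality of the whole construction then follows formally. Once the functor is in place, the only genuinely content-bearing input is the additivity theorem of \cite{HA} guaranteeing the $E_{n-1}$ structure; everything else is bookkeeping about adjoint functors and monoidal structures.
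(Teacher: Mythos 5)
Your proposal is correct and follows essentially the same route as the paper: both rest on Dunn--Lurie additivity (identifying $E_n$-$R$-algebras with associative algebras in $E_{n-1}$-$R$-algebras, Theorem 5.1.2.2 of \cite{HA}) together with the functoriality of the bar construction on augmented algebras (Example 5.2.2.3 of \cite{HA}) and the fact that the bar construction is computed in underlying $R$-modules. The only difference is in how the $(R \wedge R)$-algebra structure is produced: the paper first forms $\Bar(R, A \wedge R, R)$ internally to $\Mod_R$, landing in $E_{n-1}$-$R$-algebras, and then applies $- \wedge_R (R \wedge R)$, which is a cleaner way to carry out your step (3) than checking compatibility of the two maps from $R \wedge R$ by hand.
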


\begin{proof}
    Let $\CC$ denote the $\infty$-category $\mathrm{Alg}^{\mathbb{E}_{n-1}} _{R}$ of $\mathbb{E}_{n-1}$-$R$-algebras, equipped with the symmetric monoidal structure induced by that of $\mathrm{Mod}_R$. Then the bar construction defines a functor $\mathrm{Bar} : \mathrm{Alg} (\CC)_{/R} \to \CC$ by \cite[Remark 5.2.2.19]{HA}. By \cite[Theorem 5.1.2.2]{HA}, $\Alg (\CC)$ is equivalent to $\mathrm{Alg}^{\mathbb{E}_n} _{R}$, so that $\mathrm{Bar}$ defines a functor from augmented $\mathbb{E}_n$-$R$-algebras to $\mathbb{E}_{n-1}$-$R$-algebras.

    Since the forgetful functor $\CC \to \mathrm{Mod}_R$ preserves sifted colimits by \cite[Proposition 3.2.3.1]{HA}, Bar is computed in $R$-modules and so $\mathrm{Bar}(-) \cong R \wedge_{-} R$ as functors into $R$-modules.

	This implies the existence of a natural map $R \wedge_{A \wedge R} R \to R \wedge_{B \wedge R} R$ of $\mathbb{E}_{n-1}$-$R$-modules. Applying the functor $- \wedge_{R} (R \wedge R)$ yields the desired map $R \wedge_A R \to R \wedge_B R$ of $\mathbb{E}_{n-1}$-$(R \wedge R)$-algebras.
\end{proof}

We are now ready to prove Theorem \hyperref[MUThm]{\ref*{MUThm}}.
The $p=2$ case of this theorem was sketched by Lawson in \cite[Remark 4.4.7]{BPtwo}.

\begin{proof}[Proof of \Cref{MUThm}]
	For the sake of simplicity of notation, we prove Theorem \hyperref[MUThm]{\ref*{MUThm}} for $\mathrm{BP}$. The proof for $\mathrm{BP}\<n\>$ with $n \geq 3$ is analogous. Taking the $p$-completion changes nothing because we are only using the mod $p$ homology.

	Begin by noting that the K\"unneth spectral sequence
  $$\Tor_{*,*} ^{\pi_* \mathrm{BP}} (\mathrm{H}_*, \mathrm{H}_*) \Rightarrow \pi_* (\mathrm{H} \wedge_{\mathrm{BP}} \mathrm{H})$$
  collapses at the $E^2$-term, so that there is an isomorphism
  $\pi_* (\mathrm{H} \wedge_{\mathrm{BP}} \mathrm{H}) \cong \Lambda_{\FF_p} (\sigma v_i)$.

	Suppose that there were a map of $\mathbb{E}_{2p+3}$-rings $\mathrm{MU} \to \mathrm{BP}$. By the naturality of Postnikov towers of $\mathbb{E}_{2p+3}$-rings, this is a map of $\mathbb{E}_{2p+3}$-algebras augmented over $\mathrm{H}$. Then Proposition \hyperref[En]{\ref*{En}} implies that this induces a map $\mathrm{H} \wedge_{\mathrm{MU}} \mathrm{H} \to \mathrm{H} \wedge_{\mathrm{BP}} \mathrm{H}$ of $\mathbb{E}_{2p+2}$-$(\mathrm{H} \wedge \mathrm{H})$-algebras. Forgetting the action of the left $\mathrm{H}$, we obtain a map of $\mathbb{E}_{2p+2}$-$\mathrm{H}$-algebras.
	
    Now, the induced map $\Lambda_{\FF_p} (\tau_0, \sigma x_i) \cong \mathrm{H} \wedge_{\mathrm{MU}} \mathrm{H} \to \mathrm{H} \wedge_{\mathrm{BP}} \mathrm{H} \cong \Lambda_{\FF_p} (\sigma v_k)$ sends $\sigma x_{p^k -1}$ to a nonzero multiple of $\sigma v_k$.
    On the other hand, \Cref{DLMUSt} implies that we have $Q^{p^2+1} \sigma x_{p(p-1)} = C \sigma x_{p^3-1}$ for some nonzero $C$.
    Since $\sigma x_{p(p-1)}$ goes to zero in $\Lambda_{\FF_p} (\sigma v_k)$ for degree reasons, this is a contradiction because the operation $Q^{p^2+1} \sigma x_{p(p-1)}$ is preserved by maps of $\mathbb{E}_{2p+2}$-$\mathrm{H}$-algebras by \Cref{DLWhen}.
	%
\end{proof}

\section{A secondary power operation in the dual Steenrod algebra}\label{secondary}

In this section, we define and compute a secondary power operation in the dual Steenrod algebra and deduce Theorem \hyperref[MainThm]{\ref*{MainThm}} from this computation. We make free use of the formalism of Toda brackets in categories enriched over pointed topological spaces developed in \cite[Section 2]{BPtwo}, including the juggling, additivity and Peterson-Stein formulae of \cite[Propositions 2.3.5 and 2.4.3]{BPtwo}.

\begin{nota}
	Given a set $S$ of graded formal variables, we let $\mathbb{P}_\mathrm{H} ^{n} (S)$ denote the free $\mathbb{E}_n$-$\mathrm{H}$-algebra on the wedge of spheres $\displaystyle \bigvee_{x \in S} S^{\abs{x}}$ and let $x \in \pi_{\abs{x}} \left( \mathbb{P}_\mathrm{H} ^{n} (S) \right)$ denote the homotopy element corresponding to the fundamental class $\iota_{\abs{x}} \in \pi_{\abs{x}} \left( S^{\abs{x}} \right)$.
\end{nota}

Let $x$ be a formal variable of degree $2(p-1)$ and let $\mathbb{P}_\mathrm{H} ^{2(p^2+2)} (x)$ denote the free $\mathbb{E}_{2(p^2+2)}$-$\mathrm{H}$-algebra on $x$. Then we will let $\DD$ denote the category $\left( \mathrm{Alg}_{\mathrm{H}} ^{\mathbb{E}_{2(p^2+2)},\mathrm{aug}} \right)_{\mathbb{P}_\mathrm{H} ^{2(p^2+2)} (x)/}$ of augmented $\mathbb{E}_{2(p^2+2)}$-$\mathrm{H}$-algebras under $\mathbb{P}_\mathrm{H} ^{2(p^2+2)} (x)$. We note that the initial object of $\mathcal{D}$ is $\mathbb{P}_\mathrm{H} ^{2(p^2+2)} (x)$, while the final object is $\H$. Every $\mathbb{E}_{2(p^2+2)}$-$\H$-algebra we will consider below is connective with $\pi_0 = \FF_p$, hence admits a unique augmentation to $\mathrm{H}$.

The category $\mathcal{C}$ is a topological category, so the category $\CC = \DD^{\pm}$ of possibly pointed or augmented objects \cite[Definition 2.2.2]{BPtwo} in this category is enriched over pointed topological spaces. The category $\CC$ consists of augmented objects of $\DD$, pointed objects of $\DD$, and objects of $\DD$ without a pointing or augmentation. Through casework, one is able to define pointed spaces of maps between these objects, making use of the pointings and augmentations in the expected way when present. We refer the reader to \cite[Definition 2.2.2]{BPtwo} for the somewhat lengthy details.

Whenever we take brackets in the below, it will be in the category $\CC$. Given a set of graded elements $S$, we always view $\mathbb{P}_\mathrm{H} ^{2(p^2+2)} (x, S)$ as an element of $\CC$ via the augmentation $\mathbb{P}_\mathrm{H} ^{2(p^2+2)} (x,S) \to \mathbb{P}_\mathrm{H} ^{2(p^2+2)} (x)$ sending $x$ to $x$ and all of the elements of $S$ to $0$.

\begin{nota}
In the following, we will make our computations in the exterior quotient $\Lambda_{\FF_p} (\tau_0, \tau_1, \dots)$ of the dual Steenrod algebra $\mathrm{H}_* \mathrm{H}$; we call this quotient $\mathcal{E}_*$.
\end{nota}

\subsection{Dyer-Lashof operations in $\mathrm{H}_* (\mathrm{MU})$ and $\mathrm{H}_* \mathrm{H}$}\label{DLRev} We will need to be able to compute Dyer-Lashof operations in $\mathrm{H}_* (\mathrm{MU})$ and $\mathrm{H}_* \mathrm{H}$. We will find the description of this action in terms of Newton polynomials convenient for our purposes, so we review how this works. Our choice to describe the action in this way was inspired by \cite[Section 5]{Baker}.

We define the Newton polynomials $N_n (t) = N_n (t_1, \dots, t_n) \in \ZZ [t_1, \dots, t_n]$ by setting $N_1 (t) = t_1$ and inductively letting
$$N_n (t) = t_1 N_{n-1} (t) - t_2 N_{n-2} (t) + \dots + (-1)^{n-2} t_{n-1} N_1 (t) + (-1)^{n-1} n t_n. $$

Then the following useful relation holds:
$$N_{pn} (t) = (N_n (t))^p \mod p.$$

We let $N_n (b) \in \mathrm{H}_* \mathrm{MU}$ be defined by setting $t_n = b_n$, and let $N_n (\xi) \in \mathrm{H}_* \mathrm{MU}$ be defined by setting $t_{p^k -1} = \xi_k$ and the other $t_n$ to zero.
When viewed as an element of $\H_* \mathrm{BU}$ under the Thom isomorphism, $N_n (b)$ generates the subgroup of primitive elements in degree $2n$.
Writing out the recurrence for $N_{p^k-1} (\xi)$ shows that $N_{p^k-1} (\xi) = - \xibar_k$ where $x \mapsto \overline{x}$ is the conjugation in the Hopf algebra $\mathrm{H}_* \mathrm{H}$.

Kochman \cite[Theorem 5]{Koch} showed that the action of the Dyer-Lashof operations on $N_n(b)$ is described by the formula:
$$Q^r N_n (b) = (-1)^{r+n} {r-1 \choose n-1} N_{n+r(p-1)} (b).$$
Since the orientation $\mathrm{MU} \to \mathrm{H}$ maps $b_{p^k-1}$ to $\xi_k$ and the other $b_n$ to zero, it maps $N_n (b)$ to $N_n (\xi)$ and so we also have:
$$Q^r N_n (\xi) = (-1)^{r+n} {r-1 \choose n-1} N_{n+r(p-1)} (\xi).$$ 
Using $N_{p^k -1} (\xi) = -\xibar_k$, we get:
$$Q^r \xibar_k = (-1)^{r+1} {r-1 \choose p^k - 2} N_{p^k - 1 + r(p-1)} (\xi).$$

These formulas provide good control over the action of the Dyer-Lashof operations on $\H_* \H$ and primitive elements in $\H_* \mathrm{BU}$.
We will also need a good understanding of the action of the Dyer-Lashof operations on the indecomposable generators of $\H_* \MU \cong \H_* \mathrm{BU}$.
It is immediately from the definitions that $N_n (b)$ is such a generator when $n$ is relatively prime to $p$, but is decomposable when $n$ is divisible by $p$.
In degrees divisible by $p$, the work of Lance determines generators on which the action of the Dyer-Lashof operations is convenient \cite{Lance}.
In the integral homology $\H_* (\mathrm{BU}, \ZZ_{(p)})$, Lance inductively defines classes $a_{n,k}$ for $k \geq 0$ and $n$ coprime to $p$ by the formula
\[N_{np^k} (b) = a_{n,0}^{p^k} + p a_{n,1}^{p^{k-1}} + \dots + p^k a_{n,k},\]
so that $\abs{a_{n,k}} = 2p^k n$.
In the above, we use $N_n (b)$ to refer to the integral Newton polynomial, which generates the subgroup of primitive elements in $\H_{2n} (\mathrm{BU}; \ZZ)$.
When $k = 0$, we simply have $a_{n,0} = N_n (b)$.
Lance proves that these classes lie in $\H_* (\mathrm{BU}; \ZZ_{(p)})$ and are indecomposable generators \cite[Theorem 2.3]{Lance}.

To determine the action of the Dyer-Lashof algebra on the mod $p$ reductions $\overline{a}_{n,k} \in \H_{2p^k n} (\mathrm{BU})$, Lance proves in \cite[Theorem 4.2]{Lance} that there is a $p$-local integral lift of the Dyer-Lashof operations on $\H_* (\mathrm{BU})$ to operations $Q^i : \H_{n} (\mathrm{BU}; \ZZ_{(p)}) \to \H_{n+2(p-1)i} (\mathrm{BU}; \ZZ_{(p)})$ which satisfy the following properties:
\begin{enumerate}
  \item $Q^i$ is a linear map.
  \item The Cartan formula $Q^i (xy) = \sum_{j+k = i} Q^j(x) Q^k (y)$ is satisfied.
  \item We have $Q^r N_n (b) = (-1)^{r+n} {r-1 \choose n-1} N_{n+r(p-1)} (b)$.
\end{enumerate}
Using these properties, one may inductively determine the action of the integral lifts of the Dyer-Lashof operations on $a_{n,k}$.

Using the above formulas and the Cartan formula, we may deduce the following two propositions by direct calculation. These propositions summarize what we need to know about the action of the Dyer-Lashof operations on $\H_* \H$ and $\H_* \MU$, respectively.

\begin{prop}\label{StDL}
	In the dual Steenrod algebra $\mathrm{H}_* \mathrm{H}$, the following identities hold:
	\begin{align*}
		&Q^{p^2} \xibar_1 = (\xibar_1 ^{p-1})^p Q^{p} \xibar_1 \\
		&Q^{p^2+i} \xibar_1 = 0 \,\, \mathrm{ for } \, \,i = 1, \dots, p-2 \\
		&Q^{p^2+p-1} \xibar_1 = -(Q^p (\xibar_1))^p \\
		&Q^{p^2-p+1} (\xibar_1 ^{p-1}) = (\xibar_1)^{p^2} \\
		&Q^{p^2 +pi} Q^p \xibar_1 = 0 \,\, \mathrm{ for } \, \, i = 1, \dots, p-1 \\
		&Q^{2p} \xibar_1 = -\xibar_1 ^p Q^p (\xibar_1).
	\end{align*}
\end{prop}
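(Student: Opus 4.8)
The plan is to verify each of the six identities in Proposition~\ref{StDL} by a direct, if somewhat tedious, calculation using the three explicit formulae assembled just before the statement: the recurrence defining $N_n(t)$, Kochman's formula $Q^r N_n(\xi) = (-1)^{r+n}\binom{r-1}{n-1} N_{n+r(p-1)}(\xi)$ together with its specialization $Q^r \xibar_k = (-1)^{r+1}\binom{r-1}{p^k-2} N_{p^k-1+r(p-1)}(\xi)$, and the Frobenius relation $N_{pn}(t) = (N_n(t))^p \bmod p$. Throughout I would work in $H_* H$, freely using $N_{p^k-1}(\xi) = -\xibar_k$ and the fact that $Q^p \xibar_1 = -\xibar_1^{p-1} \cdot (\text{something})$ — more precisely, I would first record the auxiliary identity $Q^p \xibar_1 = N_{p^2-1}(\xi)$ up to sign, since $\xibar_2 = -N_{p^2-1}(\xi)$ and $Q^p$ applied to $\xibar_1$ lands in degree $2(p^2-1)$; expanding the Newton recurrence for $N_{p^2-1}(\xi)$ in terms of $\xi_1$ and $\xi_2$ gives an explicit polynomial expression that will be reused in several of the other lines.

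The bulk of the work is bookkeeping with binomial coefficients mod $p$. For the first identity I would compute $Q^{p^2}\xibar_1 = -\binom{p^2-1}{p-2} N_{p^2-1+p^2(p-1)}(\xi)$; the degree $p^2-1+p^2(p-1) = p^3-1$ and the index is divisible by $p$, so the Frobenius relation rewrites $N_{p^3-1}(\xi) = (N_{p^2-1}(\xi))^p \cdot (\text{correction})$ — actually one needs $p^3 - 1 = p(p^2-1) + (p-1)$, so the Frobenius relation does not apply directly and instead I would expand $N_{p^3-1}(\xi)$ via the recurrence, matching it against $(\xibar_1^{p-1})^p Q^p\xibar_1$; the key arithmetic fact is $\binom{p^2-1}{p-2} \equiv \text{(sign)} \bmod p$ by Lucas's theorem. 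The vanishing statements ($Q^{p^2+i}\xibar_1 = 0$ for $1 \le i \le p-2$, and $Q^{p^2+pi}Q^p\xibar_1 = 0$ for $1 \le i \le p-1$) are the easiest: here I would show the relevant binomial coefficient $\binom{p^2+i-1}{p-2}$ (respectively $\binom{p^2+pi-1}{p^2-2}$) vanishes mod $p$ by Lucas, so the whole operation is zero regardless of what $N_{(\cdot)}(\xi)$ is. For $Q^{p^2+p-1}\xibar_1 = -(Q^p\xibar_1)^p$ and $Q^{2p}\xibar_1 = -\xibar_1^p Q^p\xibar_1$ and $Q^{p^2-p+1}(\xibar_1^{p-1}) = -\xibar_1^{p^2}$, I would use the Cartan formula to handle $\xibar_1^{p-1}$ (noting $Q^{p^2-p+1}$ is the top operation on a class of degree $2(p-1)^2$... wait, $|\xibar_1^{p-1}| = 2(p-1)^2$ and $2(p^2-p+1) - 2(p-1)^2 = 2(p^2 - p + 1 - p^2 + 2p - 1) = 2p$, hmm, so it is not the top operation) — in any case these three reduce, via Cartan and the Frobenius relation $N_{p^2-1}(\xi)^p = N_{p(p^2-1)}(\xi)$, to identities among the $N_n(\xi)$ that follow from Kochman's formula after another Lucas-type congruence.

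The main obstacle I expect is not conceptual but organizational: keeping the signs $(-1)^{r+n}$ and the binomial coefficients straight across all six lines, and in particular correctly invoking the Frobenius relation $N_{pn}(t) = (N_n(t))^p$ only when the index is actually divisible by $p$ — several of the target degrees ($p^3-1$, etc.) are $-1 \bmod p$, so one must instead peel off the recurrence by hand or use the relation in the form relating $N_{pn}$ to $N_n^p$ together with a separate analysis of the non-divisible part. A secondary subtlety is that some of these operations lie in the range $2s - \deg(x) = n-1$ rather than $\le n-2$ (cf.\ Theorem~\ref{DLWhen}), so one should be careful that the formulae being used are the genuinely-defined instability relations in $H_*H$ rather than consequences of Cartan; since $H$ is $E_\infty$ this is not an issue for the computation itself, but it matters for how the identities get applied later. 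I would present the proof as a sequence of six short paragraphs, one per identity, each ending in the cited congruence $\binom{\cdot}{\cdot} \equiv \pm 1$ (or $\equiv 0$) mod $p$ proved by Lucas's theorem, and I would state the auxiliary expansion of $N_{p^2-1}(\xi)$ once at the start so it can be reused.
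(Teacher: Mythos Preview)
Your approach is exactly the paper's: the proof given there is literally ``by direct calculation'' using Kochman's formula $Q^r N_n(\xi) = (-1)^{r+n}\binom{r-1}{n-1}N_{n+r(p-1)}(\xi)$, the Newton recurrence, and the Frobenius relation $N_{pn} = N_n^p$, with the binomial congruences handled by Lucas. One small slip to watch: in your first identity the index should be $p-1 + p^2(p-1) = (p^2+1)(p-1)$, not $p^2-1+p^2(p-1)$ (you used $k=2$ where $k=1$ was needed in $p^k-1$); once that is fixed the recurrence unwinds to $\xibar_2(\xibar_1^{p-1})^p = (\xibar_1^{p-1})^p Q^p\xibar_1$ as you anticipated.
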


\begin{prop}\label{MUDL}
	The following identities hold in $\mathrm{H}_* (\mathrm{MU})$:
  \begin{align*}
    Q^{p^2-p+1} (\overline{a}_{p-1,1}) = & \, \,Q^{p^2} N_{p-1} (b) - N_{p-1} (b) ^{p(p-1)} Q^p (N_{p-1}(b)) \\
    Q^{p^2-p+1} (N_{p-1} (b)^p) = & \, \, 0 \\
    Q^{p^2+i} N_{p-1} (b) = & \, \, 0 \,\, \mathrm{ for } \, \,i = 1, \dots, p-2 \\
    Q^{p^2+p-1} N_{p-1} (b) = & \, \, -(Q^p (N_{p-1} (b)))^p \\
    Q^{p^2-p+1} (N_{p-1} (b)^{p-1}) = & \, \, (N_{p-1} (b))^{p(p-2)} (N_{2(p-1)} (b))^p \\
    Q^{p^2 +pi} Q^p N_{p-1} (b) = & \,\, 0 \,\, \mathrm{ for } \, \, i = 1, \dots, p-1 \\
    Q^{2p} N_{p-1} (b) = & \, \, - N_{(2p+1) (p-1)} (b) \\
    Q^p N_{p-1} (b) = & \, \, N_{(p+1)(p-1)} \\
    Q^{p^2+1} (\overline{a}_{p-1,1}) =& \,\, N_{p-1} (b)^{p(p-1)} N_{(2p+1)(p-1)} (b) \\
    & - (N_{p-1}(b)^{p-2} N_{2(p-1)} (b))^p Q^p N_{p-1}(b) \\
    Q^{p^2+1} (N_{p-1} (b)^p) = & \, \, 0.
  \end{align*}
%
%
\end{prop}

Finally, we identify some elements in the kernel of $\H_* \MU \to \H_* \H$.

\begin{prop} \label{prop:MUHker}
  The following classes lie in the kernel of $\H_* \MU \to \H_* \H$:
  \begin{enumerate}
    \item $N_{2(p-1)} (b) + N_{p-1} (b)^2$ 
    \item $- \overline{a}_{p-1,1} - N_{p-1} (b)^p$
    \item $N_{(2p+1)(p-1)} (b) + N_{(p+1)(p-1)} (b) N_{p-1} (b)^p.$
  \end{enumerate}
\end{prop}

\begin{proof}
  Cases (1) and (3) are elementary computations using the definition of the Newton polynomials and the fact that $b_{p^i -1} \mapsto \xi_i$ and the rest of the $b_i$ go to zero.
  To prove case (2), we recall that by definition we have
  \[p a_{p-1,1} = N_{p(p-1)} (b) - N_{p-1} (b)^p.\]
  Since $b_1, \dots, b_{p-2}, b_p , \dots, b_{p(p-1)}$ go to zero under $\H_* \MU \to \H_* \H$, it suffices to compute $a_{p-1,1}$ modulo $I = (b_1, \dots, b_{p-2}, b_p, \dots b_{p(p-1)})$.
  It is easy to see that $N_{(p-1)} (b) \equiv -(p-1) b_{p-1} \mod I$ and $N_{i(p-1)} (b) \equiv - b_{p-1} N_{(i-1)(p-1)} (b) \mod I$ for $i \leq p$, so that $N_{p(p-1)} (b) \equiv -(p-1) b_{p-1} ^p \mod I$.
  We therefore have
  \begin{align*}
    p a_{p-1,1} &\equiv - (p-1) b_{p-1}^p - (-(p-1)b_{p-1})^p \equiv ((p-1)^p - (p-1))b_{p-1} ^p  \\
    &\equiv -p b_{p-1} ^p \mod (p^2, I),
  \end{align*}
  so that
  \[\overline{a}_{p-1,1} \equiv - b_{p-1}^p \mod (p, I).\]
  Since $N_{p-1} (b) \equiv b_{p-1} \mod (p,b_1, \dots, b_{p-2})$, we are done.
\end{proof}

%
%
%

\subsection{A relation among power operations}\label{SecRel}

We will define the secondary operation of interest to us in terms of the following relation between primary power operations.

\begin{prop}\label{relation}

	Let $R$ be an $\mathbb{E}_{2(p^2+2)}$-$\mathrm{H}$-algebra and $x \in \pi_{2(p-1)} (R)$. Define classes $a_i$, $i=0, \dots, p-1$; $b$; $c_i$, $i=1, \dots, p$ in $\pi_* (R)$ by the following formulae:
	\begin{align*}
		&a_0 = Q^{p^2}x - (x^{p-1})^p Q^p x \\
		&a_i = Q^{p^2 + i} x \,\, \mathrm{ for } \,\, i = 1, \dots, p-2 \\
		&a_{p-1} = Q^{p^2+p-1} x + (Q^p x)^p \\
		&b = Q^{p^2-p+1} (x^{p-1}) - x^{p^2} \\
		&c_i = Q^{p^2 + pi} Q^p x \,\, \mathrm{ for } \,\, i = 1, \dots, p-1 \\
		&c_p = Q^{2p} x + (Q^p x) x^p
	\end{align*}
	Then the following identity holds:
	\begin{align*}
		0 = & Q^{p^3+p} a_0 + \sum_{i=1}^{p-2} (-1)^i Q^{p^3+p-i} a_i + Q^{p^3+1} a_{p-1} +\\
		& b^p Q^{p^2} Q^p x + \sum_{i=1} ^{p-1} (Q^{p^2 - p - i + 1} (x^{p-1}))^p c_i + (x^{p-1})^{p^2} Q^{2p^2-p} c_p
	\end{align*}
\end{prop}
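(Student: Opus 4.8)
The plan is to treat this as a purely formal identity among Dyer--Lashof operations. Since both sides are natural in the pair $(R,x)$, it suffices to verify it in the universal case $R=\mathbb{P}_H^{2(p^2+2)}(x)$. The coherence $2(p^2+2)$ is exactly what the argument needs: by \Cref{DLWhen} every operation occurring below behaves as expected (Cartan formula, Adem relations, instability), with the largest operation $Q^{p^3+p}$ applied to $a_0\in\pi_{2(p-1)(p^2+1)}(R)$ sitting precisely at the bound, since $2(p^3+p)-2(p-1)(p^2+1)=2(p^2+2)-2$. So the whole computation is a formal manipulation.

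First I would collect the terms whose leading part is an iterated operation $Q^{p^3+p-i}Q^{p^2+i}x$: the summand $Q^{p^3+p}Q^{p^2}x$ of $Q^{p^3+p}a_0$, the sum $\sum_{i=1}^{p-2}(-1)^iQ^{p^3+p-i}Q^{p^2+i}x$, and the summand $Q^{p^3+1}Q^{p^2+p-1}x=Q^{p^3+p-(p-1)}Q^{p^2+(p-1)}x$ of $Q^{p^3+1}a_{p-1}$. Together these form $\sum_{i=0}^{p-1}(-1)^iQ^{p^3+p-i}Q^{p^2+i}x$, and I claim this vanishes: it is exactly the Adem relation for the inadmissible composite $Q^{p^3+p}Q^{p^2}$. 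What must be checked is that the binomial coefficient attached to the $i$-th admissible term is $\equiv 1\pmod p$; after reindexing this coefficient is (up to sign) $\binom{(p-1)i-1}{p(i-1)}$ with $1\le i\le p-1$, and writing $(p-1)i-1=p(i-1)+(p-1-i)$ in base $p$ while $p(i-1)=p(i-1)+0$, Lucas's theorem gives $\binom{(p-1)i-1}{p(i-1)}\equiv\binom{i-1}{i-1}\binom{p-1-i}{0}=1$.

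What remains is to show that the remaining ``correction'' terms sum to zero, i.e.
\[
-Q^{p^3+p}\!\big((x^{p-1})^pQ^px\big)+Q^{p^3+1}\!\big((Q^px)^p\big)+b^pQ^{p^2}Q^px+\sum_{i=1}^{p-1}\big(Q^{p^2-p-i+1}(x^{p-1})\big)^p c_i+(x^{p-1})^{p^2}Q^{2p^2-p}c_p=0.
\]
The key elementary input is that $Q^s(w^p)=(Q^{s/p}w)^p$ when $p\mid s$ and $Q^s(w^p)=0$ otherwise, which follows from the iterated Cartan formula because over $\FF_p$ every non-constant $\Sigma_p$-orbit of exponent tuples has order divisible by $p$. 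In particular $Q^{p^3+1}((Q^px)^p)=0$, and feeding this into the Cartan expansion of $Q^{p^3+p}\!\big((x^{p-1})^p\cdot Q^px\big)$ gives $Q^{p^3+p}\!\big((x^{p-1})^pQ^px\big)=\sum_a\big(Q^a(x^{p-1})\big)^pQ^{p(p^2+1-a)}(Q^px)$, where instability restricts to $(p-1)^2\le a\le p^2-p+1$. The middle terms $(p-1)^2<a<p^2-p+1$ are exactly $\sum_{i=1}^{p-1}(Q^{p^2-p-i+1}(x^{p-1}))^pc_i$, since there $Q^{p(p^2+1-a)}Q^px=c_{\,p^2+1-a-p}$; after cancelling these I would match the two extreme terms $a=(p-1)^2$ (where $Q^{(p-1)^2}(x^{p-1})=(x^{p-1})^p$) and $a=p^2-p+1$ against $b^pQ^{p^2}Q^px$ and $(x^{p-1})^{p^2}Q^{2p^2-p}c_p$, using $b^p=(Q^{p^2-p+1}(x^{p-1}))^p+x^{p^3}$, the Adem relation $Q^{2p^2}Q^px=Q^{2p^2-p}Q^{2p}x$, the Cartan reduction $Q^{2p^2-p}\!\big((Q^px)x^p\big)=x^{p^2}Q^{p^2}Q^px$ (only one exponent splitting survives instability), and $x^{p^3}=(x^{p-1})^{p^2}x^{p^2}$. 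After these substitutions the matching is a finite list of term-by-term cancellations.

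The bulk of the work --- and the main obstacle --- is the bookkeeping in the last two paragraphs: because essentially every operation in sight lives at the edge of the range in which \Cref{DLWhen} guarantees good behaviour, one must track carefully which operations are honest operations, which collapse to $p$-th powers, which vanish, and what signs the Adem relations produce. This is precisely the information that the iterative construction of the relation sketched in the introduction is designed to record, and I would expect writing out these cancellations in general $p$ to be the lengthy but essentially mechanical core of the proof.
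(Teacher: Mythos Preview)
Your approach is essentially identical to the paper's. The paper's proof is terse: it checks the $E_{2(p^2+2)}$ bound exactly as you do, and then simply lists the five sub-identities
\[
Q^{p^3+p}Q^{p^2}x=\textstyle\sum_{i=1}^{p-1}(-1)^{i+1}Q^{p^3+p-i}Q^{p^2+i}x,\quad
Q^{p^3+1}((Q^px)^p)=0,
\]
\[
Q^{p^3+p}\bigl((x^{p-1})^pQ^px\bigr)=\textstyle\sum_{i=0}^{p}(Q^{p^2-p-i+1}(x^{p-1}))^pQ^{p^2+pi}Q^px,\quad
Q^{2p^2}Q^px=Q^{2p^2-p}Q^{2p}x,
\]
\[
Q^{2p^2-p}(x^pQ^px)=x^{p^2}Q^{p^2}Q^px,
\]
declaring that they follow from the Adem relations, instability, and the Cartan formula, and that the main identity is obtained by plugging them in. Your write-up recovers exactly these five pieces (in the same order), but adds the explicit Lucas-theorem verification of the Adem coefficients and spells out the instability bounds in the Cartan expansion; this extra detail is helpful and entirely in the spirit of the paper's sketch.
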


\begin{proof}
  It is elementary to check that the operation in this relation which takes the greatest $n$ to be defined on $\mathbb{E}_n$-$\mathrm{H}$-algebras is $Q^{p^3+p} a_0$. Since $\abs{a_0} = 2(p-1)(p^2+1)$, we conclude from \Cref{DLWhen} that this is defined and satisfies the usual properties whenever
	\begin{align*}
		n \geq 2(p^3+p) - 2(p-1)(p^2+1) + 2 = 2(p^2+2),
	\end{align*}
	so that this relation is defined for $\mathbb{E}_{2(p^2+2)}$-$\mathrm{H}$-algebras.

	The desired identity reduces to the following identities, which may be deduced from the Adem relations, the instability relations, and the Cartan formula:
	\begin{align*}
		&Q^{p^3+p} Q^{p^2} x = \sum_{i=1}^{p-1} (-1)^{i+1} Q^{p^3+p-i} Q^{p^2+i} x \\
		&Q^{p^3+1} (( Q^p x)^p) = 0 \\
		&Q^{p^3+p} ((x^{p-1})^p Q^p x) = \sum_{i=0}^p (Q^{p^2-p-i+1} (x^{p-1}))^p Q^{p^2+pi} Q^p x \\
		&Q^{2p^2} Q^p x = Q^{2p^2 - p} Q^{2p} x \\
		&Q^{2p^2 - p} (x^p Q^p x) = x^{p^2} Q^{p^2} Q^p x. \qedhere
	\end{align*}
\end{proof}

Let the symbols $a_i$, $i=0, \dots, p-1$; $b$; $c_j$, $j=1, \dots, p$ have the gradings
of the elements in Proposition \hyperref[relation]{\ref*{relation}}, and let $d$ have the grading of the relation described there.
Then the relation above determines maps $$Q : \mathbb{P}_\mathrm{H} ^{2(p^2+2)} (x, a_0, \dots, a_{p-1}, b, c_0, \dots, c_{p-1}) \to \mathbb{P}_\mathrm{H} ^{2(p^2+2)} (x)$$ and $$R : \mathbb{P}_\mathrm{H} ^{2(p^2+2)} (x,d) \to \mathbb{P}_\mathrm{H} ^{2(p^2+2)} (x, a_0, \dots, a_{p-1}, b, c_0, \dots, c_{p-1})$$ in $\mathcal{C}$ (in particular, $R$ is a map of augmented objects) such that the composition $Q \circ R$ is nullhomotopic in $\mathcal{C}$.
	

\begin{defin}
  From this point on, we regard $\H \wedge \H$ as an object of $\mathcal{C}$ by first lifting it to $\mathcal{D}$ by setting $x = \xibar_1$ and then regarding it as a neither pointed nor augmented object.
\end{defin}

	\begin{prop}\label{bracketdefined}

		The bracket $\langle \xibar_1, Q, R \rangle$ is defined in $\mathrm{H}_* \mathrm{H}$ and has zero indeterminacy in the quotient $\mathcal{E}_* = \Lambda_{\FF_p} (\tau_0, \tau_1, \dots)$ of $\mathrm{H}_* \mathrm{H}$.

	\end{prop}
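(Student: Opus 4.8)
The plan is to unwind the definition of the triple bracket $\langle \xibar_1, Q, R \rangle$ and reduce its existence to the two facts that (i) $\xibar_1$ pulls back along $Q$ to a well-defined map and (ii) this pullback further factors through $R$, i.e. the relevant composite with $R$ is nullhomotopic. Recall that a point of the bracket $\langle \xibar_1, Q, R \rangle$ is obtained from a map $y_1 \colon S^{2(p-1)} \to \mathbb{P}_H ^{2(p^2+2)} (x)$ representing $\xibar_1$ together with a nullhomotopy of $Q \circ R$ (which we already have by \Cref{relation}), provided $y_1$ lifts along $Q$. So the first step is to show that $\xibar_1$, viewed via the orientation $\mathbb{P}_H ^{2(p^2+2)} (x) \to H$ sending $x$ to $N_{p-1}(b)$ (equivalently $-\xibar_1$, using $N_{p-1}(\xi) = -\xibar_1$), lifts along $Q$. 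Concretely, $Q$ sends each generator $a_i$, $b$, $c_j$ to the corresponding Dyer-Lashof expression in $x$; lifting $\xibar_1$ along $Q$ amounts to choosing null\-homotopies witnessing that each of those expressions vanishes on the class $-\xibar_1 \in H_* H$. These vanishings are exactly the identities of \Cref{StDL}: $Q^{p^2}\xibar_1 = (\xibar_1^{p-1})^p Q^p \xibar_1$ kills $a_0$, the vanishing $Q^{p^2+i}\xibar_1 = 0$ kills $a_i$ for $1 \le i \le p-2$, $Q^{p^2+p-1}\xibar_1 = -(Q^p\xibar_1)^p$ kills $a_{p-1}$, $Q^{p^2-p+1}(\xibar_1^{p-1}) = -\xibar_1^{p^2}$ kills $b$, $Q^{p^2+pi}Q^p\xibar_1 = 0$ kills $c_i$ for $1 \le i \le p-1$, and $Q^{2p}\xibar_1 = -\xibar_1^p Q^p\xibar_1$ kills $c_p$. (One must be slightly careful with signs and with the sign flip coming from $x \mapsto -\xibar_1$, but since each relation is homogeneous and an odd prime is inverted on the exterior quotient this is harmless; alternatively use $N_{p-1}(b) \mapsto N_{p-1}(\xi) = -\xibar_1$ throughout and invoke \Cref{MUDL}, whose identities are the $MU$-homology lifts of those in \Cref{StDL}.)

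Having produced a lift of $\xibar_1$ along $Q$, the bracket $\langle \xibar_1, Q, R \rangle$ is nonempty by the formalism of Toda brackets in pointed-space-enriched categories from Section 2 of \cite{BPtwo}: a lift together with the given null\-homotopy of $Q \circ R$ assemble into a map out of the cofiber, whose value recovers a well-defined subset of $H_* H$. This is the content of ``the bracket is defined.''

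For the indeterminacy statement, the plan is to identify the indeterminacy with a sum of images of maps induced by $Q$ and $R$ on the relevant homotopy groups and then show each such contribution lands in the ideal $(\xi_1, \xi_2, \dots)$ that we kill in passing to $E_* = \Lambda_{\FF_p}(\tau_0, \tau_1, \dots)$. By the standard description, the indeterminacy of $\langle \xibar_1, Q, R\rangle$ is $\xibar_1 \cdot (\text{image of } R_*) + (\text{image of }Q_*) \cdot [\text{stuff from }\Sigma(\text{source of }R)]$ — more precisely it is generated by brackets $\langle \xibar_1, Q, - \rangle$ with the third entry varying and by $\langle -, -, R\rangle$-type terms, each of which is a multiple of a positive-degree class coming from $\mathbb{P}_H^{2(p^2+2)}(x)$ mapped into $H$. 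The key point is that every indeterminacy class is divisible by an element in the image of the orientation $\mathbb{P}_H^{2(p^2+2)}(x) \to H$ lying in positive degree; since $x$ maps to $N_{p-1}(\xi) = -\xibar_1$, which lies in the polynomial part $\FF_p[\xi_i]$, every such positive-degree image is a polynomial in the $\xi_i$ with no constant term, hence lies in the augmentation ideal of $\FF_p[\xi_i]$. Products of such classes with anything therefore vanish in the quotient $E_*$, where all $\xi_i$ are set to zero. So the indeterminacy is zero in $E_*$.

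The main obstacle I expect is bookkeeping rather than conceptual: correctly matching the generators $a_i$, $b$, $c_j$ of the source of $Q$ with the precise Dyer-Lashof identities of \Cref{StDL}/\Cref{MUDL} — including getting every sign right under the substitution $x \mapsto -\xibar_1$ and checking that the chosen null\-homotopies are genuinely compatible with the composite structure so that they glue to a point of the bracket. A secondary technical point is pinning down the exact form of the indeterminacy in this pointed-topologically-enriched setting (as opposed to a classical stable Toda bracket) and verifying that each of its generators really is divisible by a positive-degree class pulled back from $\mathbb{P}_H^{2(p^2+2)}(x)$; once that divisibility is in hand, the collapse to zero in $E_*$ is immediate.
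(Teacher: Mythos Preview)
Your argument that the bracket is \emph{defined} is correct and matches the paper: the composite $\xibar_1 \circ Q$ vanishes precisely by the identities in \Cref{StDL}, and the nullhomotopy of $Q \circ R$ is \Cref{relation}.

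The indeterminacy argument, however, has a genuine gap. You correctly note that there are two sources of indeterminacy, but you then assert that \emph{every} indeterminacy class is divisible by a positive-degree element of $\FF_p[\xi_i]$. This is right for the first source---homotopy operations applied to $\xibar_1$---since such operations are built from $Q^n$, $\beta Q^n$, products, and the Browder bracket; the Browder bracket vanishes because $H$ is $E_\infty$, and the remaining operations preserve the polynomial subalgebra $\FF_p[\xibar_i] \subset H_*H$, so the result dies in $E_*$. (This is the paper's argument; your ``divisibility'' phrasing is a slightly imprecise version of the same observation.)

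But the claim fails for the second source, the image of the suspended relation $\sigma R$. Here the indeterminacy consists of $Q^{p^3+p}(\alpha_0) + \cdots + Q^{p^3+1}(\alpha_{p-1})$ (plus manifestly decomposable terms) where the $\alpha_i$ are \emph{arbitrary} classes in $H_*H$ of degree $\lvert \sigma a_i \rvert = 2(p-1)(p^2+i+1)+1$. There is no reason such classes should be divisible by any $\xi_j$. The paper instead uses a degree count in $E_*$: one checks that no generator $\tau_k$ has degree $\lvert \sigma a_i \rvert$ for $0 \le i \le p-1$ (equivalently, $1+p+\cdots+p^{k-1} = p^2+i+1$ has no solution with $0 \le i \le p-1$), so every class in $E_*$ in those degrees is decomposable. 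Since Dyer-Lashof operations send decomposables to decomposables (Cartan formula), the image of $\sigma R$ is decomposable in $E_*$. Finally, $E_*$ has no nonzero decomposables in the target degree $2p^4-1$ (the only class there is $\tau_4$), so this contribution vanishes. You need this degree argument; the divisibility claim does not cover it.
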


	\begin{proof}

		To show that the bracket is defined, we need to show that $Q(\xibar_1) = 0$. This is equivalent to \Cref{StDL}.

    The indeterminacy comes from degree $(2p^3+2p^2+2p+1)$ $\mathbb{E}_{2(p^2+2)}$-$\mathrm{H}$-algebra homotopy operations applied to $\xibar_1$ and from the image of the suspended operation $\sigma R$. Since $\xibar_1$ lies in the image of the map $\H_* \MU \to \H_* \H$ induced by the $\mathbb{E}_\infty$-map $\MU \to \H$, the indeterminacy of the first type also lies in this image and hence maps to zero in $\mathcal{E}_*$.
%

    By the odd-primary analogue of \cite[Proposition 2.6.13]{BPtwo}, $\sigma R$ is equal to
    \begin{align*}
      Q^{p^3 +p} \sigma a_0 &+ \sum_{i =1} ^{p-2} (-1)^i Q^{p^3+p-i} \sigma a_i + Q^{p^3+1} \sigma a_{p-1}\\
      &+ \sum_{i=1} ^{p-1} (Q^{p^2-p-i+1}(x^{p-1}))^p \sigma c_i + (x^{p-1})^{p^2} Q^{2p^2-p} \sigma c_p,
    \end{align*}
    where the $\sigma a_i$ and $\sigma c_i$ are variables in degree one higher than $a_i$ and $c_i$, respectively. In particular, $\abs{\sigma a_i} = 2(p^2+i+1)(p-1) + 1$ for $i = 0, \dots, p-1$. Since $\H_* \H$ is decomposable in these degrees, we conclude that the second sort of indeterminacy must be decomposable in $\H_* \H$. Since there are no nonzero decomposables in $\mathcal{E}_*$ in degree $2p^4-1$, we conclude that the indeterminacy must actually be trivial in $\mathcal{E}_*$.
\end{proof}

\subsection{Computation of the secondary operation}\label{compsecond}

  To compute this operation, we will first juggle it into a functional operation for the map $\mathrm{H} \wedge \mathrm{MU} \to \mathrm{H} \wedge \mathrm{H}$. To this end, we define maps in $\CC$ (in this case, maps augmented over $\mathbb{P}_\mathrm{H} ^{2(p^2+2)} (x)$):
	\begin{align*}
    &\mu : \mathbb{P}_\mathrm{H} ^{2(p^2+2)} (x, a_0, \dots, a_{p-1}, b, c_0, \dots, c_{p-1}) \to \mathbb{P}_\mathrm{H} ^{2(p^2+2)} (x,y_{2(p-1)},y_{p(p-1)},y_{(2p+1)(p-1)}) \\
    &\overline{Q} : \mathbb{P}_\mathrm{H} ^{2(p^2+2)} (x, z_{p^3-1}) \to \mathbb{P}_\mathrm{H} ^{2(p^2+2)} (x,y_{2(p-1)},y_{p(p-1)},y_{(2p+1)(p-1)}) \\
		&\nu : \mathbb{P}_\mathrm{H} ^{2(p^2+2)} (x,d) \to \mathbb{P}_\mathrm{H} ^{2(p^2+2)} (x, z_{p^3-1}),
	\end{align*}
  where $\abs{y_i} = 2i$ and $\abs{z_{p^3-1}} = 2(p^3-1)$, by:
	\begin{align*}
		\mu(a_0) &= Q^{p^2-p+1} y_{p(p-1)} \\
		\mu(a_i) &= 0 \,\, \mathrm{ for } \,\, i \neq 0 \\
    \mu(b) &= -x^{p(p-2)} y_{2(p-1)}^p \\
    \mu(c_i) &= 0 \text{ for } i \neq p \\
    \mu(c_p) &= y_{(2p+1)(p-1)} \\
    \overline{Q} (z_{p^3-1}) &= Q^{p^2+1} y_{p(p-1)} - (x^{p-2} y_{2(p-1)})^p Q^p x + x^{p(p-1)} y_{(2p+1)(p-1)} \\
    \nu(d) &= Q^{p^3} z_{p^3-1}.
	\end{align*}
%
%
%
  \begin{defin}
    From this point on, we regard $\H \wedge \MU$ as an object of $\mathcal{C}$ by first lifting it to $\mathcal{D}$ by setting $x = - N_{p-1} (b)$ and then regarding it as a neither pointed nor augmented object.
  \end{defin}

  \begin{prop} \label{prop:good}
		There is an identity $\mu R = \overline{Q} \nu$ and a homotopy commutative diagram
		\begin{center}
		\begin{tikzcd}[row sep = huge, column sep = large]
			\mathbb{P}_\mathrm{H} ^{2(p^2+2)} (x, a_0, \dots, a_{p-1}, b, c_0, \dots, c_{p-1}) \arrow{r}{Q} \arrow{d}{\mu} & \mathbb{P}_\mathrm{H} ^{2(p^2+2)} (x) \arrow{d}{-N_{p-1}(b)} \arrow{rd}{\xibar_1} & \\
      \mathbb{P}_\mathrm{H} ^{2(p^2+2)} (x, y_{2(p-1)}, y_{p(p-1)}, y_{(2p+1)(p-1)}) \arrow{r}{f} & \mathrm{H} \wedge \mathrm{MU} \arrow{r}{p} & \mathrm{H} \wedge \mathrm{H},
		\end{tikzcd}
		\end{center}
    where $f$ is the map defined by the following:
    \begin{align*}
      x &\mapsto -N_{p-1} (b) \\
      y_{2(p-1)} &\mapsto N_{2(p-1)} (b) + N_{p-1}(b)^2 \\
      y_{p(p-1)} &\mapsto -\overline{a}_{p-1,1} - N_{p-1} (b)^p \\
      y_{(2p+1)(p-1)} &\mapsto N_{(2p+1)(p-1)} (b) + N_{(p+1)(p-1)}(b) N_{p-1} (b)^p.
    \end{align*}
    Furthermore, the composites $p \circ f$ and $f \circ \overline{Q}$ are null in the category $\mathcal{C}$.
	\end{prop}

	\begin{proof}

    The right triangle of the diagram commutes because $\xibar_1 = - N_{p-1} (\xi)$ and hence $p(-N_{p-1} (b)) = \xibar_1$.
    The left square commutes by the first eight equations of \Cref{MUDL}.

    The identity $\mu R = \overline{Q} \nu$ follows from the relations
    \[Q^{p^3+p} Q^{p^2-p+1} y_{p(p-1)} = Q^{p^3} Q^{p^2+1} y_{p(p-1)}\]
    \[Q^{p^3} ((x^{p-2} y_{2(p-1)})^p Q^p x) = (x^{p(p-2)} y_{2(p-1)}^p)^p Q^{p^2} Q^p x\]
    \[Q^{p^3} (x^{p(p-1)} y_{(2p+1)(p-1)}) = x^{p^2(p-1)} Q^{2p^2-p} y_{(2p+1)(p-1)},\]
    which are readily deduced from the Adem, Cartan and instability relations.

    Finally, the fact that $p \circ f$ and $f \circ \overline{Q}$ are null in the category $\mathcal{C}$ follows from \Cref{prop:MUHker} and the final three equations of \Cref{MUDL}, respectively.
%
%
	%
%
	\end{proof}

	\begin{prop}\label{CompProp1}
		There is an equality $\<\xibar_1, Q, R\> \equiv Q^{p^3} (\<p,f,\overline{Q}\>)$ in $\mathcal{E}_*$.
	\end{prop}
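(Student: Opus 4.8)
\textbf{Proof proposal for Proposition \ref{CompProp1}.}

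The plan is to deduce the identity by a standard juggling argument in the topological category $\CC$, combining the homotopy-commutative diagram and the factorization $\mu R = \overline{Q}\nu + \beta\alpha$ established in the previous proposition with the Peterson-Stein relation and the juggling formulae of \cite{BPtwo}, Propositions 2.3.5 and 2.4.3. First I would rewrite $\<\xibar_1, Q, R\>$ using the right triangle of the diagram: since $\xibar_1 = p\circ(-N_{p-1}(b))\circ$ (the composite through $H\wedge MU$), the bracket $\<\xibar_1, Q, R\>$ can be identified, via the commuting square $f\circ\mu = (-N_{p-1}(b))\circ Q$ and the naturality/juggling properties of Toda brackets in categories enriched over pointed spaces, with a bracket of the form $\<p, f, \mu R\>$ (composed appropriately with $R$), where $p\colon H\wedge MU\to H\wedge H$ is the canonical map. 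The key point is that passing the null-homotopy of $Q\circ R$ through the square turns it into the data needed to form a functional-operation bracket for the map $f$ and the projection $p$.

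Next I would split this bracket using the additivity formula (\cite{BPtwo}, Prop.~2.3.5) along the decomposition $\mu R = \overline{Q}\nu + \beta\alpha$. The term coming from $\beta\alpha$ should contribute nothing in $E_*$: the map $\beta$ is built out of Dyer-Lashof operations applied to $x$ and to $y_{2(p-1)}$, and after composing with $f$ (which sends $x\mapsto -N_{p-1}(b)$, $y\mapsto -N_{2(p-1)}(b)/2$) and then projecting to $H\wedge H$, these land in the subalgebra generated by the $\xibar_i$, which vanishes in $E_* = \Lambda_{\FF_p}(\tau_0,\tau_1,\dots)$; one checks this degree-by-degree exactly as in the indeterminacy analysis of Proposition \ref{bracketdefined}. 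So only the $\overline{Q}\nu$ term survives. Since $\nu(d) = -Q^{p^3} z_{p^3-1}$, the operation $\nu$ is (up to sign) the single Dyer-Lashof operation $Q^{p^3}$, and pulling this out of the bracket via the juggling formula for pre-composition with a primary operation yields $\<\xibar_1, Q, R\> \equiv -Q^{p^3}\<p, f, \overline{Q}\>$ in $E_*$.

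The main obstacle, and where I would spend the most care, is the bookkeeping of signs and of the homotopies: forming the functional operation $\<p, f, \overline{Q}\>$ requires knowing that $p\circ f\circ \overline{Q}$ is null (so that the bracket is defined), which follows from the homotopy-commutativity of the diagram together with $Q\circ R\simeq *$ and the factorization $\mu R = \overline{Q}\nu+\beta\alpha$ — but assembling these into a single coherent null-homotopy, and tracking how the juggling isomorphism of \cite{BPtwo} reassociates the brackets, is the delicate part. A secondary subtlety is to confirm that the indeterminacy of both sides is killed in $E_*$ so that ``$\equiv$'' is meaningful; for the left side this is Proposition \ref{bracketdefined}, and for the right side one argues, as in that proof, that the relevant indeterminacy groups (homotopy operations on the source class and the image of the suspended relation) are decomposable in the exterior quotient and hence zero in the degrees that occur. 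Everything else is a routine application of the formal Toda-bracket calculus set up in Section 2 of \cite{BPtwo}.
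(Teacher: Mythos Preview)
Your juggling strategy is the paper's: rewrite $\<\xibar_1, Q, R\>$ via the commuting square as $\<p, f\mu, R\>$, pass to $\<p, f, \mu R\>$, split along $\mu R = \overline{Q}\nu + \beta\alpha$, extract $\nu(d)=-Q^{p^3}z$ from the surviving piece, and discard the other. The paper records the chain of containments explicitly and then upgrades them to equalities in $E_*$ by checking that the indeterminacy of the two ``local maxima'' brackets $\<p, N_{p-1}(b)Q, R\>$ and $\<p,f,\overline{Q}\nu\> + \<p,f,\beta\alpha\>$ is decomposable (it comes from the image of $H_*MU$, the image of $\sigma R$, and the images of $\sigma(\overline{Q}\nu)$ and $\sigma(\beta\alpha)$, each disposed of by a short degree check). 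Your indeterminacy discussion is compatible with this, if less specific.

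There is, however, a real gap in your reason for discarding $\<p,f,\beta\>\alpha$. You argue that $\beta$ is built from Dyer-Lashof operations on $x$ and $y$, so that after composing with $f$ and projecting along $p$ the result lands in the polynomial subalgebra $\FF_p[\xibar_i]$ and hence dies in $E_*$. But what you have described is the \emph{composite} $p\circ f\circ\beta$, which is nullhomotopic --- that is exactly the condition for $\<p,f,\beta\>$ to be defined --- and this says nothing about the value of the bracket itself. The bracket is a functional operation and can perfectly well hit $\tau$-classes: indeed $\<p,f,\overline{Q}\>$, to which your reasoning would apply verbatim since $\overline{Q}$ is also a single Dyer-Lashof operation on $y$, is a nonzero multiple of $\tau_3$ by Proposition~\ref{CompProp2}. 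So your argument, as stated, would prove too much.

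The paper's argument looks at $\alpha$ rather than $\beta$. Whatever classes $\<p,f,\beta\>$ produces in $H_*H$ (one for each of the generators $w_i$, $c_i$, $z_{p^2(p-1)}$, $z_{(2p+1)(p-1)}$), precomposing with $\alpha$ applies the formula for $\alpha(d)$ to them. Every summand of $\alpha(d)$ is either a product --- hence decomposable in $E_*$ --- or of the form $Q^{p^3+p-(i+1)}w_i$ with $1\le i\le p-2$. For the latter, the input class sits in a degree where $E_*$ has no indecomposables (the only candidate nearby, $\tau_3$, would require $i=p-1$), so the input is already decomposable and therefore so is the output. That is the mechanism you need to substitute for your $\beta$-based argument.
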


	\begin{proof}
    Applying \Cref{prop:good} repeatedly, we see that the juggling relations for brackets imply the following sequence of identities:
		\begin{align*}
      \< \xibar_1, Q, R \> &= \<p (-N_{p-1} (b)), Q, R\> \\
      &\subset \<p, (-N_{p-1} (b)) Q, R\> \\
			&= \<p, f \mu, R \> \\
			&\supset \<p, f, \mu R\> \\
			&= \<p,f, \overline{Q} \nu\> \\
			&\supset \<p,f,\overline{Q} \> \sigma\nu.
		\end{align*}

    Let us show that the image in $\mathcal{E}_*$ of each bracket appearing above is equal to a single element. 
    To do this, it suffices to show that this is true for $\< p, (-N_{p-1} (b)) Q, R \>$.
    This follows from the argument of \Cref{bracketdefined}.
    To conclude, we note that $\sigma \nu(d) = Q^{p^3} \sigma z_{p^3-1}$.
%
%
%
%
	\end{proof}

	Finally, we compute the bracket $\<p,f,\overline{Q}\>$ by means of Theorem \hyperref[DLMUSt]{\ref*{DLMUSt}}.

  \begin{defin}
    Since the map $i : \H \wedge \H \to \H \wedge_{\MU} \H$ sends $x = \xibar_1$ to zero, the composite map $\mathbb{P}_\mathrm{H} ^{2(p^2+2)} (x) \xrightarrow{\xibar_1} \H \wedge \H \xrightarrow{i} \H \wedge_{\MU} \H$ factors through the augmentation $\mathbb{P}_\mathrm{H} ^{2(p^2+2)} (x) \to \H$. As a consequence, we may regard $\H \wedge_{\MU} \H$ as a pointed object of $\mathcal{C}$ and the map $i$ as a morphism in $\mathcal{C}$.
  \end{defin}

	\begin{prop}\label{CompProp2}
		There is an inclusion $C \tau_3 \in \<p,f,\overline{Q} \>$ after reducing to $\mathcal{E}_*$ for some $C \in \FF_p ^\times$.
	\end{prop}

	\begin{proof}
		By noting that each pair of maps in the diagram
		\begin{align*}
			\mathbb{P}_\mathrm{H} ^{2(p^2+2)} (x, z_{p^3-1}) \xrightarrow{\overline{Q}} \mathbb{P}_\mathrm{H} ^{2(p^2+2)} (x,y_{p(p-1)}) \xrightarrow{f} \mathrm{H} \wedge \mathrm{MU} \xrightarrow{p} \mathrm{H} \wedge \mathrm{H} \xrightarrow{i} \mathrm{H} \wedge_{\mathrm{MU}} \mathrm{H}
		\end{align*}
    compose to a nullhomotopic map in $\CC$ (the first two by \Cref{prop:good} and the third by definition), we find that we are allowed to apply the Peterson-Stein formula to obtain the equality $$i \<p,f,\overline{Q}\> = -\<i,p,f\> \sigma \overline{Q}.$$
    Since both $\overline{a}_{p-1,1}$ and the Hurewicz image of $[\CP^{p(p-1)}]$ are indecomposable generators in degree $2p(p-1)$, we must have
    \[\overline{a}_{p-1,1} \equiv C [\CP^{p(p-1)}] \mod \mathrm{decomposables}\]
    for some $C \in \FF_p ^\times$.
    It follows from \cite[Proposition 2.7.5]{BPtwo} that
    \[-C\sigma[\CP^{p(p-1)}] = \sigma (-\overline{a}_{p-1,1}) = \sigma(-\overline{a}_{p-1,1} - N_{p-1} (b)^p) \in \<i,p,f\>.\]
    Since $\sigma \overline{Q} = Q^{p^2+1} \sigma y_{p(p-1)} + x^{p(p-1)} \sigma y_{(2p+1)(p-1)}$ by the odd-primary version of \cite[Proposition 2.6.13]{BPtwo}, it follows from \Cref{DLMUSt} and the fact that $x=0 \in \pi_* \H \otimes_{\MU} \H$ that
    \[-C\sigma v_3 \in - \< i, p ,f \> \sigma \overline{Q} = i \< p,f,\overline{Q} \>.\]
    Since $\pi_* (i)$ factors as
    \[\H_* \H \twoheadrightarrow \mathcal{E}_* \hookrightarrow \pi_* (\H \wedge_{\MU} \H)\]
    and $\tau_3$ spans $\mathcal{E}_{2p^3-1}$, we conclude that after reducing to $\mathcal{E}_*$ we must have
    \[C \tau_3 \in \<p,f,\overline{Q}\>\]
    for some, possibly different, $C \in \FF_p ^\times$.
%
%
%
%
	\end{proof}

	\begin{cor}
		For some $C \in \FF_p ^\times$, we have $\<\xibar_1,Q,R\> \equiv C \tau_4$  in $\mathcal{E}_*$.
	\end{cor}

	\begin{proof}
    Combine Propositions \hyperref[CompProp1]{\ref*{CompProp1}} and \hyperref[CompProp2]{\ref*{CompProp2}} with the operation $Q^{p^3} \tau_{3} \equiv \tau_4$ in $\mathcal{E}_*$, which may be deduced from \cite[Theorem III.2.3]{Hinf}.
	\end{proof}

    Since maps of $\mathbb{E}_{2(p^2+2)}$-ring spectra must preserve secondary power operations \cite[Proposition 2.6.7]{BPtwo}, we obtain the following corollary.

	\begin{cor}\label{FinCor}
		Let $R$ be an $\mathbb{E}_{2(p^2+2)}$-ring spectrum and let $R \to \mathrm{H}$ be a map of $\mathbb{E}_{2(p^2+2)}$-ring spectra. Then if the induced map on homology $\mathrm{H}_* R \to \mathrm{H}_* \mathrm{H}$ is injective in degrees less than or equal to $2(2p^2+1)(p-1)$ and contains $\xibar_1$ in its image, then $\tau_4$ must also be in the image of the composite $\mathrm{H}_* R \to \mathrm{H}_* \mathrm{H} \to \mathcal{E}_*$.
	\end{cor}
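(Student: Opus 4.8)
The plan is to derive the statement by combining the preceding Corollary with the naturality of secondary power operations. Recall that the preceding Corollary identifies $\langle \xibar_1, Q, R \rangle$ with a nonzero multiple $C\tau_4$ of $\tau_4$ in $E_*$, and that by \Cref{bracketdefined} this bracket has zero indeterminacy in $E_*$; thus the image of $\langle \xibar_1, Q, R \rangle$ under $H_* H \to E_*$ is exactly the singleton $\{C\tau_4\}$.

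Let $\phi \colon R \to H$ be the given map of $E_{2(p^2+2)}$-ring spectra, inducing $\phi_* \colon H_* R \to H_* H$. Since $\xibar_1$ lies in the image of $\phi_*$ and $\phi_*$ is injective in degree $2(p-1)$, there is a unique class $\tilde x \in H_{2(p-1)} R$ with $\phi_*(\tilde x) = \xibar_1$; it is classified by a map $\mathbb{P}_H^{2(p^2+2)}(x) \to H \wedge R$ of $E_{2(p^2+2)}$-$H$-algebras whose composite with $H \wedge R \to H \wedge H$ classifies $\xibar_1$. I would first check that $\langle \tilde x, Q, R \rangle$ is defined in $\pi_*(H \wedge R) = H_* R$. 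As in the proof of \Cref{bracketdefined}, this is exactly the assertion $Q(\tilde x) = 0$, i.e.\ that substituting $\tilde x$ for $x$ in the formulae of \Cref{relation} makes the classes $a_i(\tilde x)$ ($i = 0, \dots, p-1$), $b(\tilde x)$ and $c_j(\tilde x)$ ($j = 1, \dots, p$) all vanish in $H_* R$. The Dyer-Lashof operations entering these formulae are natural for $\phi$ in the degrees where they are defined on an $E_{2(p^2+2)}$-$H$-algebra (\Cref{DLWhen}), so $\phi_*$ sends each of these classes to the corresponding class built from $\xibar_1$, and those vanish in $H_* H$ by \Cref{StDL}. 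Hence each of $a_i(\tilde x)$, $b(\tilde x)$, $c_j(\tilde x)$ lies in $\ker \phi_*$ in its degree; since $\phi_*$ is injective throughout the range of degrees occupied by these classes, they all vanish, so $Q(\tilde x) = 0$ and $\langle \tilde x, Q, R \rangle$ is defined.

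Finally, by the naturality of secondary power operations (Proposition 2.1.10 of \cite{BPtwo}), $\phi_* \langle \tilde x, Q, R \rangle \subseteq \langle \phi_* \tilde x, Q, R \rangle = \langle \xibar_1, Q, R \rangle$ in $H_* H$. Composing with $H_* H \to E_*$, the image of the (nonempty) set $\langle \tilde x, Q, R \rangle$ under $H_* R \xrightarrow{\phi_*} H_* H \to E_*$ is a nonempty subset of $\{C\tau_4\}$, hence equal to it; as this composite is $\FF_p$-linear and $C \neq 0$, we conclude that $\tau_4$ itself lies in its image.

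The step I expect to be the main obstacle is the middle one: verifying that the injectivity hypothesis really does imply $Q(\tilde x) = 0$. This requires bounding the degrees of the primary operations $a_i(\tilde x)$, $b(\tilde x)$, $c_j(\tilde x)$ — the largest of which comes from the highest-excess Dyer-Lashof operations appearing in \Cref{relation} — and checking that the stated range of injectivity covers them. In the cases one cares about ($R = BP$, or $R = \BPn$ with $n \geq 4$, and their $p$-completions), the map $H_* R \to H_* H$ is injective in all degrees, so this bookkeeping is automatic and the role of the hypothesis is merely to package a clean sufficient condition. A smaller technical point is to make sure, as in \cite{BPtwo}, that a map of $E_{2(p^2+2)}$-ring spectra genuinely induces the morphism of objects of $\CC$ needed for Proposition 2.1.10 of \cite{BPtwo} to apply.
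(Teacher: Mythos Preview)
Your proposal is correct and is exactly the argument the paper intends; the paper's own proof is the single sentence invoking Proposition~2.1.10 of \cite{BPtwo}, and you have spelled out what that citation entails (choose a lift $\tilde x$, use injectivity to force $Q(\tilde x)=0$, then apply naturality and the zero-indeterminacy statement of \Cref{bracketdefined}). Your caution about the degree bound is well placed: the largest input is $c_{p-1}$ in degree $2(2p^2+1)(p-1)$, so the bound $(2p^2+1)(p-1)$ in the statement seems to be missing a factor of~$2$, but as you observe this is irrelevant for the intended applications to $BP$ and $BP\langle n\rangle$.
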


  \begin{rmk}
    In \Cref{FinCor}, the injectivity assumption ensures that for a lift $x \in \H_* R$ of $\xibar_1$, we must have $Q(x) = 0$. The quantity $2(2p^2+1)(p-1)$ arises as the degree of the highest term of the operation $Q$, which is given by $Q(c_{p-1}) = Q^{p^2+(p-1)p} Q^p x$.
  \end{rmk}

	We conclude by deducing Theorem \hyperref[MainThm]{\ref*{MainThm}} from Corollary \hyperref[FinCor]{\ref*{FinCor}}.

	\begin{proof}[Proof of \Cref{MainThm}]
		Assume that $\mathrm{BP}$ were an $\mathbb{E}_{2(p^2+2)}$-ring spectrum. Since the Postnikov tower of an $\mathbb{E}_n$-ring spectrum naturally lifts to a tower of $\mathbb{E}_n$-ring spectra, there is a map of $\mathbb{E}_{2(p^2+2)}$-ring spectra $$\mathrm{BP} \to \tau_{\leq 0} \mathrm{BP} \cong \mathrm{H}\ZZ_{(p)} \to \mathrm{H}$$ which induces the inclusion $$\FF_p [\xi_1, \xi_2, \dots] \hookrightarrow \Lambda_{\FF_p} (\tau_0, \tau_1, \dots) \otimes \FF_p [\xi_1, \xi_2, \dots]$$ upon taking homology. In particular, the map is injective and contains $\xibar_1$ in its image. However, $\tau_4$ cannot be in the image of $\mathrm{H}_* \mathrm{BP} \to \mathrm{H}_* \mathrm{H} \to \mathcal{E}_*$ because this composite is zero.

		The case of $\mathrm{BP}\<n\>$ for $n \geq 4$ is analogous, using the fact that $$\mathrm{H}_* (\mathrm{BP}\<n\>) \cong \Lambda_{\FF_p} (\tau_{n+1}, \tau_{n+2}, \dots) \otimes \FF_p [\xi_1, \xi_2, \dots].$$
		Finally, taking $p$-completions makes no difference because we are only working with mod $p$ homology in the first place.
	\end{proof}

\bibliographystyle{alpha}
\bibliography{main}

\end{document}